\pgfplotsset{compat=1.10}
\newtheorem{theorem}{Theorem}[section]
\newtheorem{proposition}[theorem]{Proposition}
\newtheorem{lemma}[theorem]{Lemma}
\newtheorem{corollary}[theorem]{Corollary}
\newtheorem{remark}[theorem]{Remark}
\newtheorem{example}[theorem]{Example}
\newtheorem{definition}[theorem]{Definition}
\newtheorem{corollaries}[theorem]{Corollaries}
\newtheorem{theoremf}{Folk Formula}[section]
\newtheorem{maintheorem}{Main Theorem}[section]
\newcommand{\bth}{\begin{theorem}}
\newcommand{\bpr}{\begin{proposition}}
\newcommand{\epr}{\end{proposition}}
\newcommand{\bco}{\begin{corollary}}
\newcommand{\eco}{\end{corollary}}
\newcommand{\ble}{\begin{lemma}}
\newcommand{\ele}{\end{lemma}}
\newcommand{\bde}{\begin{definition}\rm}
\newcommand{\ede}{\end{definition}\rm}
\newcommand{\bre}{\begin{remark}\rm}
\newcommand{\ere}{\end{remark}}
\newcommand{\bex}{\begin{example}\rm}
\newcommand{\eex}{\end{example}}
\newcommand{\bcors}{\begin{corollaries}\rm}
\newcommand{\ecors}{\end{corollaries}}
\newcommand{\bthf}{\begin{theoremf}}
\newcommand{\bmain}{\begin{maintheorem}}
\newcommand{\emain}{\end{maintheorem}}
\def\la#1{\hbox to #1pc{\leftarrowfill}}
\def\ra#1{\hbox to #1pc{\rightarrowfill}}
\def\lrar{{\ra 2}}
\def\sp#1{\hbox{SP}^{#1}}
\def\cp#1{\hbox{CP}^{#1}}
\def\rat#1{\hbox{Rat}_{#1}}
\def\sing#1{\hbox{Sing}_{#1}}
\def\bbz{{\mathbb Z}}
\def\bbr{{\mathbb R}}
\def\bbc{{\mathbb C}}
\def\bbp{{\mathbb P}}
\def\conf{\hbox{Conf}}
\def\bbl{{\mathcal L}}
\def\bbm{{\mathcal M}}
\def\bbq{{\mathbb Q}}
\begin{document}

\title{Combinatorial Invariants of Stratifiable Spaces II}

\author{Sadok Kallel and
  Faten Labassi}
  \address{First author: American University of Sharjah, UAE, and Laboratoire Painlev\'e, Universit\'e de Lille, France}
  \email{sadok.kallel@univ-lille.fr}
  \address{Second author:  Imam Mohammad Ibn  Saud Islamic University (IMSIU), Saudi Arabia, and Department of Mathematics, Faculty of Sciences of Tunis, University of Tunis El Manar, Tunisia}
  \email{fmlabassi@imamu.edu.sa}

\maketitle

\begin{abstract}
In this follow-up to \cite{kt}, we continue developing the notion of a lego category and its many applications  to stratifiable spaces and the computation of their Grothendieck classes. We illustrate the effectiveness of this construction by giving very short derivations of the class of a quotient by the ``stratified action'' of a discrete group \cite{akita}, the class of a crystallographic quotient, the class of both a polyhedral product and a polyhedral (or simplicial) configuration space \cite{cooper}, the class of a permutation product \cite{macdo} and, foremost, the class of spaces of $0$-cycles \cite{fww}.
\end{abstract}

\section{Introduction}


Grothendiek rings are  used extensively in algebraic and arithmetic
geometry when addressing enumerative problems involving
algebraic varieties. In this context, if $X$ decomposes into a disjoint union
of locally closed subsets $X = \coprod X_i$ (i.e. a stratification), then in the Grothendieck
ring $K_0(X)$, and by definition, the isomorphism class $[X]$ of $X$ is identified to the sum
$[X] = \sum [X_i]$. The class of the categorical product is defined to be the product in $K_0(X)$ \cite{ neeraja}.

In recent work \cite{kt}, the authors extended the classical construction of the Grothendieck ring $K_0(\mathcal L)$ from the category of algebraic varieties to more general categories $\mathcal L$, whose objects are stratifiable subspaces in Euclidean space, in such a way that strata enjoy a predefined geometrical or topological property, and in such a way also that we retain the fundamental ``motivic morphism'' $\langle\ \rangle : K_0(\mathcal L)\longrightarrow\bbz$ which evaluates to the Euler characteristic with compact supports on locally compact subspaces of finite cohomological type. As it turns out, a suitable such collection $\bbm$ consists of all subspaces $X$ in Euclidean space, which can be decomposed into strata that are ``locally compact of finite type'' with finite type closure conditions (details in \S\ref{prelim}). It is in such a category that we will be working and extending various results from algebraic geometry and topology.

Here's a brief summary of the content of this paper: we will establish  formulas in $K_0(\bbm)$ for the Grothendieck classes of three main geometric constructions that are found and used in the literature. The first formula in Theorem \ref{mainaction} extends, as well as gives a much shortened proof of a formula of Akita \cite{akita} for the class of a quotient of a $G$-space by a discrete action with cocompact quotient and finite stabilizers. As a corollary, and combining with a recent result of \cite{bartosz}, we explicitly express the Euler characteristic of any orbifold quotient by a crystallographic group in terms of the fixed points of the action (Theorem \ref{crystal}).
The second set of results in \S\ref{poly} and \S\ref{poly2} give the class of polyhedral products and polyhedral configuration spaces. These constructions have appeared in various contexts in the literature, and our formula in Proposition \ref{chiiriye} generalizes formulas of \cite{iriye} and \cite{labassi} that were originally expressed in terms of Euler characteristics. Finally, Theorem \ref{fww} gives a very short derivation of a formula of Farb, Wolfson and Wood \cite{fww} on the so-called spaces of $0$-cycles. The derivation in \cite{fww} involves several pages of spectral sequence calculations, and it is stated in terms of Euler characteristics for $0$-cycles of a restricted class of manifolds.

Other results in this paper include a shortened derivation of a formula for the Grothendieck class of ``permutation products'' which are quotients of the form $X^n/\Gamma$, where $\Gamma$ is a subgroup of the symmetric group on
$n$-letters $\mathfrak S_n$, acting on $X^n$ by permutation. This builds on the approach of MacDonald and naturally recovers
his result \cite{macdo}. 

As pointed out, a major advantage of our approach is its ease and flexibility. The key advantage that comes out of \cite{kt} and this sequel is that, for stratifiable spaces, it is sometimes more convenient to work with strata directly than it is to work with invariants associated to the strata.

\vskip 10pt
{\sc Acknowledgment}: The first author is grateful to Bartosz Naskr{e}cki for sharing his work and discussing it. The authors extend their appreciation to the Deanship of Scientific Research, Imam Mohammad Ibn Saud Islamic University (IMSIU), Saudi Arabia, for funding this research work through Grant No. (221412033).


\section{Lego Categories}\label{prelim}

We start recollecting some definitions from \cite{kt}. All spaces considered in this paper will be subspaces of Euclidean space $\bbr^n$, for some $n$. They are in particular Hausdorff.

An LC space $X$ in Euclidean space means a ``locally compact'' space.
A space $X$ is LC-stratifiable if $X$ can be partitioned into a \textit{finite} number of strata $X_i$ all of which are LC. It is convenient to write in this case $X=\bigsqcup X_i$ (a disjoint union as sets).  It is also convenient to write a stratification $\mathcal S$ of $X$ as its collection of strata $\mathcal S = \{X_\alpha\}_{\alpha\in I}$ over some indexing set which we again assume to be finite.

We define a ``universal'' collection of stratified spaces
\begin{equation*}\label{principal}
\underline{\mathcal U} = \{(X,\mathcal S)\ |\
X\subset\bbr^n\ \hbox{for some $n$ and
$\mathcal S$ is an LC-stratification of $X$}\}
\end{equation*}
In this notation, an element $(X,\mathcal S)$ is a subspace of $\bbr^n$ with a stratification by LC-strata. So $X$ is LC-stratifiable if $(X,\mathcal S)$ is in $\underline{\mathcal U}$, for some stratification $\mathcal S$.

\bde\label{lego}\cite{kt} A \textit{Lego collection} is any subcollection $\underline{\bbl}$ of $\underline{\mathcal U}$ that is: (i)   closed under cartesian product, and (ii)  closed under removal of strata. This last condition means that if $(X,\{X_j\}_{j\in I})\in\underline{\bbl}$, then
$\displaystyle \left(X\setminus X_i, \{X_j\}_{j\in I\atop j\neq i}\right)\in \underline{\bbl}$.
\ede

We say that $X$ has an $\bbl$-stratification $\mathcal S$ if $(X,\mathcal S)\in\underline{\bbl}$. Alternatively, we say that $X$ is $\bbl$-stratifiable if there is a stratification $\mathcal S$ such that $(X,\mathcal S)\in\underline{\bbl}$.

Here are the main examples of Lego collections considered in the literature:
\begin{itemize}
\item \textit{(Cell-stratifiable spaces)}. This is the subcollection of all spaces that are stratifiable by open cells, or \textit{cell-stratifiable} spaces (a cell is any space homeomorphic to $\bbr^n$ for some $n$).
A general theory of cell-stratifiable spaces is developed in \cite{tamaki} where regularity conditions are demanded on attachments of cells.
\item\textit{(O-minimal structures)} \cite{coste}. These are subcollections of the cell-stratifiable spaces which also form a Boolean algebra (i.e. closed under finite unions and intersections, and under taking complements). A main example is the collection of real semi-algebraic sets which is the Boolean algebra generated by affine subsets defined by polynomial equalities and inequalities.
\item \textit{(Manifold-stratifiable spaces)} \cite{gz}.
When all strata of a stratification are manifolds, the stratification is  referred to as a ``manifold stratification". In general in the literature, manifold stratifications have more refined conditions like for ``stratifolds'', or requiring the existence of a tubular neighborhood of a stratum, like being the total space of a bundle (Thom, Mather).
\end{itemize}

It is often desirable in the literature that stratifications
$\{X_\alpha\}_{\alpha\in I}$ of a space $X$ verify the ``frontier condition''. In fact, this is sometimes part of the definition of a stratification. By ``Frontier condition'' it is meant that if $X_\alpha\cap\overline{X}_\beta\neq\emptyset$, then $X_\alpha\subset\overline{X}_\beta$. In other words, the strata have the property that the closure of a stratum is a union of other strata. Such stratifications have natural posets associated to them \cite{petersen, tamaki, yokura}, and thus naturally associated spectral sequences as well. The following Lemma shows that there is no loss of generality in starting with any LC-stratification.

\ble Let $\mathcal S = \{X_i\}$ be an LC-stratification of $X$ in $\bbr^n$, for some $n\geq 1$. Then $\mathcal S$ can be canonically refined to a stratification satisfying the frontier condition.
\ele

\begin{proof}
For $A\subset\bbr^n$, with closure $\overline{A}$, we write
$\underline{A}=\overline{A}\setminus A = \overline{A}\cap A^c$.
The set $A$ is locally closed (or equivalently locally compact since working in $\bbr^n$) if and only if $\underline{A}$ is closed. In other words, the part of the boundary of $A$ in $X$ that is in the complement must be closed.
 Note that $\underline{A}$ is written $\check A$ in \cite{allouche}.

Given $\mathcal S = \{X_i\}$ and LC-stratification of $X$, set $Y_i=\mathring{X}_i$ (the interior of $X_i$) and
$Z_i = \overline{X}_i\setminus (\underline{X}_i\cup\mathring{X}_i) = \overline{X}_i\setminus (\mathring{X}_i\cup X_i^c)$ (i.e. this is part of the boundary of $X_i$ in $\bbr^n$ that is in $X_i$). The boundary $\overline{X}_i\setminus\mathring{X}_i$ is closed in $\bbr^n$ thus LC, and $Z_i$ is open in this boundary since $X_i$ is LC. It follows that $Z_i$ is LC by standard properties of LC-spaces. All of $Y_i$ and $Z_i$ are LC, for all $i$, and we can write
$X = \bigsqcup Y_i\sqcup\bigsqcup Z_i$ some refined LC-stratification of $X$ obtained from $\mathcal S$. We must refine this further by decomposing the boundary pieces so that if each part overlaps with a closure $\overline{X_j}$, $j\neq i$, then it is completely contained in that closure. This is done by writing
$$Z_i = \left(Z_i\setminus \bigcup_{i\neq j}\overline{X}_j\right)\sqcup\bigsqcup_{j\neq i}
\left(Z_i\cap \overline{X}_j\setminus\bigcup_{k\not\in\{i,j\}}\overline{X}_k\right)\sqcup\cdots\sqcup \left(Z_i\cap\bigcap_{j\neq i}\overline{X}_j\right)$$
The resulting stratification of $X$ can now be checked to verify
the frontier condition.
\end{proof}


\subsection{Grothendieck Rings and Motivic Morphisms}
Let $\underline{\bbl}$ be a Lego collection (Definition \ref{lego}). We define
the lego category $\mathcal L$ to be the category whose objects are all spaces $X$ such that $(X,S)\in\underline{\bbl}$ for some $S$.  In other words, the objects of the category are the $\bbl$-stratifiable subspaces. Morphisms are the continuous maps. This is a small category, and
the isomorphism classes form a set.
We can then define the commutative ring
$K_0(\mathcal L)$
to be the quotient of the free abelian group on isomorphism classes $[X]$ of objects $X\in\mathcal L$ by the subgroup generated by $[X]-[Y]-[X\setminus Y]$, whenever $Y$ is a stratum of an $\bbl$-stratification of $X$, and by  $[X][Y]-[X\times Y]$.
The zero element of the ring is $[\emptyset]$ and the identity for multiplication is the class of a point $[pt]$.

By definition, in $K_0(\bbl )$, if $X=\bigsqcup X_i$ is an $\bbl$-stratification, then $[X]= \sum [X_i]$.

Unitary ring morphisms  $\langle\ \rangle : K_0(\mathcal L)\longrightarrow R$, where $R$ is a commutative ring, are called ``motivic morphisms''.
It is not clear in general that an interesting motivic morphism exists for a given $\bbl$, meaning that it is well-defined and non-trivial. Any such morphism evaluated on $X\in\mathcal L$ must be independent of the stratification used, up to isomorphisms in the category.

In particular, a Lego category $\mathcal L$ becomes of interest, at least in the viewpoint of \cite{kt}, if the ring morphism $\langle\ \rangle : K_0(\mathcal L)\rightarrow\bbz$, sending $[X]=\sum [X_i]$ to $\sum\chi_c(X_i)$, is a well-defined morphism. We give some examples to explain why the construction of such morphisms is subtle and non-trivial.

An LC space $X$ is of finite cohomological type (i.e. cft) if
$H^*_c(X,\bbz)$ vanishes beyond a certain degree, and is finitely generated otherwise,  see \eqref{hc}.
We say a space is ``LCFT'' if it is LC, cft and locally contractible. Then $\chi_c$ is the Euler characteristic with compact supports defined for LCFT spaces by
$$\chi_c(X) = \sum (-1)^i\hbox{rank} H^*_c(X,\bbz)\in\bbz$$
It is well-known in the literature that $\chi_c$ is ``combinatorial'' meaning that it is additive on suitable stratifications of LC-spaces. More precisely, suppose $X$ is LC and locally contractible, and let
$\{X_i\}$ be an LCFT stratification of $X$. Then
$$
\chi_c(X) = \sum \chi_c(X_i)$$
This statement is often used in complex geometry where various subtleties do not occur. When working in Euclidean space, extra care must be taken.

The additivity of $\chi_c$ is a consequence of the long exact sequence in cohomology with compact supports. This sequence is exact because $X$ is both LC and HLC ``homologically locally connected'' (\cite{bredon},\S1). The HLC condition is ensured if the space if locally contractible, and this is what we require.
If we relax the conditions on $X$ of being LC or HLC, then $\chi_c$ might fail to be additive. The following two examples show that the two hypotheses on $X$ are necessary.

\bex \cite{kt}
Consider the spaces $X$ and $Y$ as depicted in Figure \ref{square}, with
 $\langle X\setminus Y\rangle = \langle X\rangle - \langle Y\rangle = 1-(-1)=2$.
It can be checked that $H^*_c(X\setminus Y)=0$ using
the definition of cohomology of compact supports
\begin{equation}\label{hc}
H^i_c(X) = \lim_{K\subset X} H^i(X,X\setminus K)
\end{equation}
with the limit running over the directed system of homomorphism $H^i(X,X\setminus K_1)\rightarrow H^i(X,X\setminus K_2)$ induced from inclusions of compact sets $K_1\hookrightarrow K_2$.
In our case, any compact $K$ in $X\setminus Y$ is contained in a compact $K'$ with contractible complement, thus
$H^*_c(X\setminus Y)=0$ in positive degrees. This shows that
$\langle X\setminus Y\rangle\neq \chi_c(X\setminus Y)$.
\begin{figure}[htb]
\begin{center}
\epsfig{file=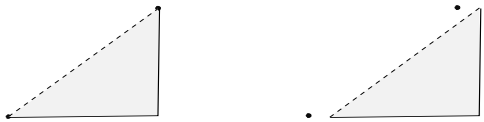,height=0.6in,width=2.5in,angle=0.0}
\caption{
An example of a non-LC space (left) decomposable into LC-strata (right): $X$ is the closed triangle and $Y\cong (0,1)$ is the diagonal segment without its endpoints, both in $\bbr^2$. Then $X\setminus Y$ (left figure) is not LC but it is LC-stratified as $X\setminus Y = S^0\sqcup (X\setminus \overline{Y})$ (right figure), where $\overline{Y}$ is the closure of $Y$.}\label{square}
\end{center}
\end{figure}
\eex

\bex Let $X = \displaystyle\left\{\left(x,\sin \left({1\over x}\right)\right), x\in (0,1]\right\}\cup
\{0\}\times [-1,1]$ be the closure of the topologist's sine curve.
This space is compact but not HLC. A computation shows that
$$H_0(X,\bbz) = \bbz\oplus\bbz\ \ ,\ \ H_i(X,\bbz)=0, i\geq 2$$
which means that $\chi (X) = \chi_c(X)=2$. Consider the cell-stratification
$$\{(1,\sin 1)\}\sqcup \{(0,1)\}\sqcup \{(0,-1)\}\sqcup \left\{\left(x,\sin \left({1\over x}\right)\right), 0<x<1\right\}\sqcup \{(0,t), -1<t<1\} $$
This gives that
$\langle X\rangle = 3-2=1$, which does not correspond to $\chi_c(X)$.
\eex

The above two examples show that the value
$\langle X\rangle := \sum \chi_c(X_i)$ is not always a characteristic if $X$ is not LC or not locally contractible.
The bulk of the work in \cite{kt} was isolating some key geometrical properties in $\bbl$ so that $\langle\ \rangle$ exists. The main collection of interest is the following.
\begin{eqnarray}\label{bbm}
\underline{\bbm} &:=&\left\{(X,\mathcal S)\in\underline{\mathcal U}\ \
\hbox{with $\mathcal S= \{X_i\}$ being such that all possible unions over closures}\ \bigcup\overline{X}_i\right.\nonumber\\
&&\ \ \ \left.\hbox{are cft and locally contractible}\right\}
\end{eqnarray}
As before, if $(X,\mathcal S)\in\underline{\bbm}$, then we say that $\mathcal S$ is an $\bbm$-stratification of $X$, and we associate the category $\mathcal M$ of all $\bbm$-stratifiable spaces.

The main result  below shows that $K_0(\bbm)$ is a non-trivial and a pretty useful object of study.

\begin{theorem} \cite{kt} The morphism
$$\langle-\rangle : K_0(\bbm)\lrar \bbz,\ \
[X]\longmapsto \langle X\rangle = \sum \chi_c(X_i),\ \hbox{if}\ \{X_i\}\ \hbox{an $\bbm$-stratification of $X$}$$
is a  well-defined ring morphism, meaning it neither depends on the isomorphism class of $X$ nor does it depend on the
$\bbm$-stratification.
\end{theorem}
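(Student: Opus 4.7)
The plan is to establish three things in sequence: (a) for a fixed $\mathbb{M}$-stratification $\mathcal S = \{X_i\}$ of $X$ the sum $\sum \chi_c(X_i)$ is finite and well-defined, (b) the value $\langle X\rangle$ does not depend on the chosen $\mathbb{M}$-stratification, and (c) the map respects the two defining relations of $K_0(\mathbb M)$, namely excision of a stratum and multiplicativity on cartesian products. Isomorphism invariance is automatic since each $\chi_c(X_i)$ is a topological invariant.

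For (a), each stratum $X_i$ is LC by hypothesis, and taking the trivial ``union'' over a single index shows that every $\overline{X}_i$ is cft and locally contractible. Since $X_i = \overline{X}_i \setminus \underline{X}_i$ with $\underline{X}_i$ closed in $\overline{X}_i$ (because $X_i$ is LC), and since $\underline{X}_i$ is also a finite union of closures of lower strata in the frontier refinement provided by the earlier lemma, the long exact sequence in cohomology with compact supports forces $X_i$ itself to be cft; local contractibility passes to open subsets, so $X_i$ is LCFT. Thus $\chi_c(X_i)\in\mathbb{Z}$ is defined, and the finiteness of the index set makes $\sum\chi_c(X_i)$ a well-defined integer.

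For (b), I would first reduce, via the canonical frontier-refinement lemma proved earlier, to comparing two $\mathbb{M}$-stratifications $\mathcal S_1=\{X_i\}$ and $\mathcal S_2=\{Y_j\}$ of $X$ that both satisfy the frontier condition, and then take their common refinement $\mathcal S_{12}=\{X_i\cap Y_j\}$. The main technical point is to verify that $\mathcal S_{12}$ is again an $\mathbb{M}$-stratification: each $X_i\cap Y_j$ is LC (intersection of two LC sets in $\mathbb{R}^n$), and any union of closures $\bigcup\overline{X_i\cap Y_j}$ is contained in a finite union of intersections $\overline{X}_i\cap\overline{Y}_j$, whose cft and locally contractibility follow from repeated application of Mayer--Vietoris to the finite unions of closures that are guaranteed to be cft and locally contractible for $\mathcal S_1$ and $\mathcal S_2$. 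Once $\mathcal S_{12}$ is known to be an $\mathbb{M}$-stratification, I would apply additivity of $\chi_c$ on LCFT stratifications, recalled in the excerpt, to obtain
$$\sum_i \chi_c(X_i) \;=\; \sum_{i,j}\chi_c(X_i\cap Y_j)\;=\;\sum_j \chi_c(Y_j).$$

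For (c), the excision relation $[X]=[Y]+[X\setminus Y]$ for $Y$ a stratum reduces, after extending the given stratification to an $\mathbb{M}$-stratification in which $Y$ appears, to additivity of $\chi_c$ on the stratification $\{Y\}\cup\{X_i\}_{i\neq Y}$ of $X$; here one uses that $\underline{\mathbb M}$ is closed under removal of strata, so $(X\setminus Y,\{X_i\}_{i\neq Y})\in\underline{\mathbb M}$. For the product relation, the product $\mathbb{M}$-stratification $\{X_i\times Y_j\}$ of $X\times Y$ exists since $\underline{\mathbb M}$ is closed under cartesian product, and the classical Künneth formula $\chi_c(X_i\times Y_j)=\chi_c(X_i)\chi_c(Y_j)$ gives $\langle X\times Y\rangle=\langle X\rangle\langle Y\rangle$.

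The step I expect to be the main obstacle is verifying that the common refinement $\mathcal S_{12}$ is an $\mathbb{M}$-stratification, because one must simultaneously control local compactness of the intersected strata and cft/local-contractibility of arbitrary finite unions of their closures, whereas $\overline{X_i\cap Y_j}$ can be strictly smaller than $\overline{X}_i\cap\overline{Y}_j$. Handling this likely requires the frontier refinement to recognize each $\overline{X_i\cap Y_j}$ as a finite union of closures of refined strata, after which the $\mathbb{M}$-condition propagates through Mayer--Vietoris.
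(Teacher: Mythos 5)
The paper does not actually prove this theorem; it is quoted from \cite{kt} with a citation and no argument, so there is no in-paper proof to compare yours against. Your architecture --- (a) each stratum is LCFT so the sum is a finite integer, (b) stratification-independence via common refinement, (c) compatibility with the cut-and-paste and product relations via additivity and K\"unneth for $\chi_c$ --- is the natural one, and (a) and (c) are essentially sound modulo the inductive argument over the closure poset that you only gesture at.

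The genuine gap is in step (b), exactly where you flag it, and it is more serious than your phrasing suggests. To run the common-refinement argument you must show that $\{X_i\cap Y_j\}$ is again an $\bbm$-stratification, i.e. that all unions of the closures $\overline{X_i\cap Y_j}$ are cft and locally contractible, and Mayer--Vietoris cannot deliver this for two reasons. First, the Mayer--Vietoris sequence in $H^*_c$ for the closed pair $(\overline{X}_i,\overline{Y}_j)$ requires the mixed union $\overline{X}_i\cup\overline{Y}_j$ to be cft, and such mixed unions are covered by neither hypothesis: the $\bbm$-condition for $\mathcal S_1$ controls unions of the $\overline{X}_i$ only, and that for $\mathcal S_2$ controls unions of the $\overline{Y}_j$ only. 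Second, local contractibility is a local rather than cohomological property and is not inherited by intersections: the $x$-axis and the graph of $x^3\sin(1/x)$ (extended by $0$) are closed, locally contractible, cft subsets of $\bbr^2$ whose intersection is a convergent sequence together with its limit point, which is neither locally contractible nor cft. So the $\bbm$-property of the refinement needs a genuinely different argument, not repeated Mayer--Vietoris. A smaller instance of the same issue hides in your claim that isomorphism-invariance is ``automatic'': a homeomorphism $X\to X'$ transports strata but not their closures in the ambient Euclidean spaces, so one must still verify that the image stratification is an $\bbm$-stratification before invoking (b).
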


In this paper and throughout, we assume all spaces to be $\bbm$-stratifiable, and Grothendieck classes will be taken in $K_0(\bbm)$. We say $X$ is $\bbm$-stratified if we fix an $\bbm$-stratification \eqref{bbm} of $X$.

\section{The Class of Cocompact Group Actions}


We streamline the notion of a stratified group action and give formulas for computing the Grothendieck class of a quotient.
We start with the following well-known classical Burnside type formula for finite groups (see Proposition 7 of \cite{gz}).

\ble\label{burnside} Let
$X^g$ denote the fixed points of the subgroup generated by $g$. Then
\begin{equation}\label{burnside}
\left[X/G\right] = {1\over |G|}\sum_{g\in G}\left[X^{g}\right]
\end{equation}
\ele

We include a proof below  to illustrate the (nice) fact that this statement is simply a ``step-by-step categorification'' of the standard proof of the Burnside counting lemma (\cite{tomdieck}, p.225).

\begin{proof}
Consider the subset
$W=\{(g,x)\in G\times X\ |\ gx = x\}= \bigsqcup_{g\in G}\{g\}\times X^g$. By the multiplicative property of the Grothendieck ring, $\displaystyle [W]=\sum_{g\in G}[\{g\}\times X^g]= \sum_{g\in G}[X^g]$.
On the other hand, the projection
$\pi: W\rightarrow X$ has fiber over $x$ the stabilizer $G_x$.  Stratify $X$ by orbit type, $X=\bigsqcup X_{(H)}$, where
$$X_{(H)}=\{x\in X\ |\ G_x\ \hbox{is conjugate to}\ H\}$$
By construction $\pi^{-1}(x)=G_x$ is conjugate to $H$, if $x\in X_{(H)}$. But $\pi$ restricted to $\pi^{-1}(X_{(H)})$ is a covering of $X_{(H)}$ of
degree $|H|$,  thus
$[\pi^{-1}(X_{(H)})] = |H|[X_{(H)}]$.
Putting these observations together
\begin{eqnarray*}
\sum_{g\in G}[X^g]&=&
[W]=[\pi^{-1}(X)]
=\sum_{(H)}[\pi^{-1}(X_{(H)})]
=\sum_{(H)}|H|[X_{(H)}]
=\sum_{(H)}|G|\frac{[X_{(H)}]}{|G/H|}\\
&=&|G|\sum_{(H)}[X_{(H)}/G]=|G|[X/G]
\end{eqnarray*}
from which the result follows after dividing by $|G|$.
\end{proof}

We now seek a formula for $[X/G]$ when $G$ is infinite acting with compact quotient, as in the case of crystallographic groups for example. The Burnside formula \eqref{burnside} does not hold  for obvious reasons. Akita in \cite{akita} gave an alternate formula, which works at the level of Euler characteristics, when $G$ is discrete acting cellularly with compact stabilizers, and having a compact quotient. We here give a similar formula for $[X/G]$, for the same type of action, which streamlines Akita's result.

\bde \label{stratact}
Let $X$ be LC-stratified and $G$ a discrete group acting on $X$. We say the action is stratified, or $X$ is $G$-stratified, if $G$ takes a stratum to a stratum, and if whenever $\gamma\in G$ fixes a point of a stratum, then it fixes the whole stratum.
A special case is when $X$ is a CW-complex stratified by its cells, in which case this action is generally referred to as a $G$-CW action.
\ede

\bre If $X$ is $G$-stratified,  then $X^g$ is a $G$-stratified subspace by the strata of $X$ that intersect it.
Indeed,let $S$ be a stratum of $X$ so that $S\cap X^g\neq\emptyset$. Then $S\subset X^g$ by the second condition in the definition \ref{stratact}.
\ere

\bde A $G$-stratified action on an LC-stratified space $X$ is of \textit{finite type} if the following two properties hold: \begin{enumerate}[label=(\roman*)]
\item The quotient $X/G$ is $\bbm$-stratified, with a finite number of strata, and

\item The isotropy subgroup $G_S$ of each stratum $S$ is of finite order.
\end{enumerate}
\ede

We denote $|G|$ the order of the group $G$, $F(G)$ the set of representatives of conjugacy classes of elements of finite order of $G$ and $C(g) = \{h\in G| hg=gh\}$  the centralizer of $g\in G$. Then $C(g)$ acts on the fixed point set $X^g$  under $g$ and this action is stratified. As is common, we denote by
$\Gamma_S$ the isotropy subgroup of the stratum $S$ under the action of $G$. The following is the main theorem of this section.

\begin{theorem}\label{mainaction} Let $G$ be a finite type stratified action on an LC-stratified $X$, and let $C_{\Gamma_{ S}}(g) = C(g)\cap\Gamma_S$ (that is the subgroup of elements that commute with $g$ and fix $S$). Let $\mathcal E_g$ be the set of representatives of $C(g)$-orbits of strata in $X^g$. Then
$$[X/G] = \sum_{g\in F(G)}\sum_{S\in\mathcal E_g}{[S]\over |C_{\Gamma_{ S}}(g)|}$$
\end{theorem}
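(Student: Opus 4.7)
The plan is to follow the categorified counting strategy of the Burnside-type Lemma~\ref{burnside}, applied stratum by stratum, and then reindex by $G$-conjugacy classes of finite-order elements. Since the action is stratified, the quotient map $\pi : X \to X/G$ sends each stratum of $X$ to a stratum of $X/G$, identifying two strata exactly when they lie in the same $G$-orbit; picking one representative $S_0$ per $G$-orbit of strata gives
$$[X/G]\ =\ \sum_{[S_0]}\bigl[\pi(S_0)\bigr]\ =\ \sum_{[S_0]}\bigl[S_0/\Gamma_{S_0}\bigr].$$
The finite-type hypothesis ensures that each $\Gamma_{S_0}$ is a finite group acting on the $\bbm$-space $S_0$, so Lemma~\ref{burnside} applies to each piece. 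Moreover the stratified action condition forces $(S_0)^\gamma$ to equal either $S_0$ or $\emptyset$ for every $\gamma\in\Gamma_{S_0}$.

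Applying Lemma~\ref{burnside} and regrouping by $\Gamma_{S_0}$-conjugacy classes (using $|[\gamma]|=|\Gamma_{S_0}|/|C_{\Gamma_{S_0}}(\gamma)|$, together with the fact that the pointwise stabilizer is normal in $\Gamma_{S_0}$ so $[(S_0)^\gamma]$ depends only on the class of $\gamma$) one finds
$$\bigl[S_0/\Gamma_{S_0}\bigr]\ =\ \frac{1}{|\Gamma_{S_0}|}\sum_{\gamma\in\Gamma_{S_0}}[(S_0)^\gamma]\ =\ \sum_{[\gamma]\in\Gamma_{S_0}/\sim}\frac{[(S_0)^\gamma]}{|C_{\Gamma_{S_0}}(\gamma)|},$$
the only nonzero contributions coming from $\Gamma_{S_0}$-classes of elements $\gamma\in\Gamma_{S_0}$ that pointwise fix $S_0$.

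The heart of the proof is now a reindexing bijection between pairs $([S_0],[\gamma]_{\Gamma_{S_0}})$, where $[S_0]$ is a $G$-orbit of strata and $[\gamma]_{\Gamma_{S_0}}$ is a $\Gamma_{S_0}$-conjugacy class of elements pointwise fixing $S_0$, and pairs $(g,[S']_{C(g)})\in F(G)\times\mathcal{E}_g$. Given $([S_0],[\gamma])$, choose the unique $g\in F(G)$ that is $G$-conjugate to $\gamma$ together with any $h\in G$ satisfying $h^{-1}\gamma h=g$; then $S':=h^{-1}S_0\subset X^g$, and the pair is sent to $(g,[S']_{C(g)})$. Conversely, $(g,[S'])$ maps to the $G$-orbit of $S'$ together with the $\Gamma_{G\cdot S'}$-class of $hgh^{-1}$ for any $h$ with $hS'\in G\cdot S'$. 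Under this bijection $[S_0]=[S']$ (isomorphic strata) and $|C_{\Gamma_{S_0}}(\gamma)|=|C_{\Gamma_{S'}}(g)|$ (conjugate subgroups have equal order), so substituting into the double sum from the previous paragraph yields exactly
$$[X/G]\ =\ \sum_{g\in F(G)}\sum_{S\in\mathcal{E}_g}\frac{[S]}{|C_{\Gamma_{S}}(g)|}.$$
The main obstacle is verifying that this bijection is well-defined: specifically, that different choices of $h$ in $h^{-1}\gamma h=g$ produce strata $h^{-1}S_0$ lying in one and the same $C(g)$-orbit, and that conjugation by $h^{-1}$ carries $C_{\Gamma_{S_0}}(\gamma)$ onto $C_{\Gamma_{S'}}(g)$; both reduce to elementary group-theoretic bookkeeping using that $C(g)$ is precisely the stabilizer of $g$ under conjugation.
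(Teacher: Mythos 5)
Your proof is correct and follows essentially the same route as the paper's: both decompose $[X/G]$ over $G$-orbits of strata, expand each orbit's contribution over conjugacy classes of the finite stabilizer $\Gamma_S$ with weights $1/|C_{\Gamma_S}(\cdot)|$, and then reindex the resulting double sum by pairs $(g,S)$ with $g\in F(G)$ and $S$ a representative of a $C(g)$-orbit of strata in $X^g$. The ``group-theoretic bookkeeping'' you defer at the end is precisely what the paper packages as the double-coset decomposition $U_S^g=C(g)\backslash T_S^g/\Gamma_S$ following Brown, so nothing essential is missing.
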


\begin{proof}
Let $S$ be a representative of an orbit in $X/G$, $\Gamma_S$ its isotropy group, $F(\Gamma_S)$ the set of the representatives of the conjugacy classes in $\Gamma_S$ (modulo $\Gamma_S$) ,
and $C_{\Gamma_S}(g)=C(g)\cap \Gamma_S$. We can write that :
\begin{eqnarray*}
[S]&=&\sum_{g\in \Gamma_S}\frac{[S]}{|\Gamma_S|}
= \sum_{g\in F(\Gamma_S)}\frac{[S]}{|\Gamma_S|}\frac{|\Gamma_S|}{|C_{\Gamma_S}(g)|}
= \sum_{g\in F(\Gamma_S)}\frac{[S]}{|C_{\Gamma_S}(g)|}
\end{eqnarray*}
where $\displaystyle\frac{|\Gamma_S|}{|C_{\Gamma_S}(g)|}$ is the number of elements in the conjugacy class of $g$.

For $g\in F(G)\cap \Gamma_S$, consider the set $T_{S}^{g}=\{\gamma \in G | \gamma^{-1}g\gamma \in \Gamma_S\}$.
We have that
$$ \Gamma_S=\coprod_{g\in F(G)\cap \Gamma_S}\left\{\gamma^{-1}g\gamma \,\,;\,\,\gamma\in T_{S}^{g}\right\}$$
Notice that for $\gamma_1,\gamma_2 \in T_{S}^{g}$, the following holds
\begin{center}
$\gamma_{1}^{-1}g\gamma_{1}$ is conjugate to $\gamma_{2}^{-1}g\gamma_{2}$ in $\Gamma_S$
$\Longleftrightarrow$ $\gamma_{2}\in C(g).\gamma_{1}.\Gamma_S$
\end{center}
Consequently, and as in \cite{brown}, we can consider the double quotient
 $$U_{S}^{g}=C(g)\backslash T_{S}^{g}/\Gamma_S $$
In there, $\{\gamma^{-1}g\gamma, \gamma \in U_{S}^{g}\}$ is a set of representatives of the different conjugacy classes in $\Gamma_S$ (mod $\Gamma_S$) which are conjugate to $g$ in $G$. We incorporate this in our earlier expression for $[S]$
\begin{eqnarray*}
[S]&=&\sum_{g\in F(G)\cap\Gamma_S}\sum_{\gamma\in U_{S}^{g}}\frac{[ S]}{|C_{\Gamma_S}(\gamma^{-1}g\gamma)|}=\sum_{g\in F(G)\cap\Gamma_S}\sum_{\gamma\in U_{S}^{g}}\frac{[\gamma S]}{|C_{\Gamma_{\gamma S}}(g)|}
\end{eqnarray*}
The last equality follows from the fact that
$$ C_{\Gamma_S}(\gamma^{-1}g\gamma)=\Gamma_{\gamma S}\cap C(g)=C_{\Gamma_{\gamma S}}(g)$$
Next, and for $\gamma \in U_{S}^{g}$, we have
\begin{eqnarray*}
\left\{hS, h\in C(g).\gamma .\Gamma_S\right\} = \left\{c.\gamma. s, c\in C(g), s\in\Gamma_S\right\}
= \left\{c \gamma S, c\in C(g)\right\}
\end{eqnarray*}
and therefore $\{\gamma S\,,\, \gamma\in U_{S}^{g}\}$ is a set of representatives of the orbits of $X^g$ modulo $C(g)$. This implies that
\begin{eqnarray*}
[S]&=&\sum_{g\in F(G)\cap\Gamma_S}\sum_{S\in\mathcal E_g}{[S]\over |C_{\Gamma_{ S}}(g)|}
\end{eqnarray*}
At last, if  $\mathcal E$ is a set of representatives in $X/G$, we can write
\begin{eqnarray*}
[X/G]&=&\sum_{S\in \mathcal E}[S] =\sum_{g\in F(G)}\sum_{S\in\mathcal E_g}{[S]\over |C_{\Gamma_{ S}}(g)|}
\end{eqnarray*}
which is our desired expression.
Notice that the proof is independent of the choice of the representative $S$ in $X/G$, since if $S'$ is in the orbit of $S$, its isotropy group is conjugate to that of $S$.
\end{proof}

\bex
Let $X=\bbr$ and $G$ the ``infinite dihedral group'' $\left\langle \tau,g \,|\, \tau^2=1, \tau g \tau=g^{-1}\right\rangle$, with $g,\tau$ acting as follows:
$\tau(x)=-x$ and $g(x)=x+1$. Here $\tau$ is the central reflection centered at the origin, and $g$ is translation to the right by $1$. Observe that $g^n \tau$ is also a reflection (so of order $2$) with a unique fixed point $\frac{n}{2}$. Stratify $\bbr$ as follows
\begin{eqnarray*}
\mathbb{R}=\bigcup_{n\in \mathbb{Z}} \left\{\frac{n}{2}\right\}\,\sqcup \,\left]\frac{n}{2},\frac{n+1}{2}\right[
\end{eqnarray*}
Then the action of $G$ is stratified with respect to this stratification.
It is clear that $X/G=[0,1/2]$ so $[X/G]= [pt]=1$.

We can now try to apply Theorem \ref{mainaction} to this situation. We notice that the elements of finite order are those of the form $r_n = g^n\tau$ (the reflections), and that these are conjugate according to the parity of $n$, for we can check that $g^{-1}r_ng = r_{n-2}$. Consequently, the set of elements of finite order up to conjugation is $F(G)=\{id,\tau, g\tau\}$, and the fixed-point sets are $\bbr^{id}=\mathbb{R},\
 \bbr^{\tau}=\{0\},\ \bbr^{g\tau}=\{1/2\}$.
On the other hand, the centralizer $C(id)$ is the whole group, and $X^{id}=\bbr$, so all the strata are in the orbits of $C(id)$ acting on $\bbr$. But there can only be three orbits, namely that of $\{0\}$ (giving all the integers), that of $\left\{{1\over 2}\right\}$ giving all midpoints, and that of $]0,{1\over 2}[$. This means that $\displaystyle \mathcal E_{id}=\left\{\{0\},]0,1[, \{1/2\}\right\}$.

Similarly, we verify that
$\mathcal E_{\tau}=\{0\}$ and that
$\mathcal E_{g\tau}=\{1/2\}$.

For every stratum $S\in\mathcal E(g)$, $g\in F(G)$, we need find $C_{\Gamma_S}(g) = \Gamma_S\cap C(g)$. They are given below.
\begin{itemize}
\item  $g=id$, $C({id})= G$. There
are three strata in $\mathcal E(id)$ and for each we find: (i) $\Gamma_{]0,1[} = \{id\}\, \hbox{and} \,
C_{\Gamma_{]0,1[}}(id) = \{id\}$, (ii)
$\Gamma_{\{0\}}=\{id,\tau\}\,\hbox{and} \,
C_{\Gamma_{\{0\}}}(id) = \{id,\tau\}$, (iii)
$\Gamma_{\{1/2\}}=\{id,g\tau\}$ and
$C_{\Gamma_{\{1/2\}}}(id) = \{id,g\tau\}$.
\item $g=\tau$, $C(\tau) = \{id, \tau\}$, and there is only one stratum $\{0\}$, with $\Gamma_{\{0\}} = \{id, \tau\}$ and
$C_{\Gamma_{\{0\}}}(id) = \{id,\tau\}$,
\item $g=g\tau$, $C(g\tau) = \{id, g\tau\}$. There is only one stratum $\{1/2\}$ in $\mathcal E_{g\tau}$. Again
$\Gamma_{\{1/2\}}= \{id,g\tau\}$ and
$C_{\Gamma_{\{1/2\}}}(g\tau ) = \{id,g\tau\}$.
\end{itemize}

We can now compute
\begin{equation}\label{compute}
[X/G]= \left(\frac{[0]}{2}+[]0,1[]+\frac{[1/2]}{2}\right)+\frac{[0]}{2}+\frac{[1/2]}{2}= {1\over 2} - 1 + {1\over 2} + {1\over 2} +{1\over 2} = 1
\end{equation}
This is in agreement with our earlier calculation.
\eex
\bre Note that in \eqref{compute} of the previous example, the quantity $\left(\frac{[0]}{2}+[]0,1[]+\frac{[1/2]}{2}\right)$ is precisely the orbifold Euler characteristic of $G$ acting on $X=\bbr$ (as a crystallographic group) and this quantity is always $0$ indeed \cite{bartosz}. More about this in \S\ref{crystal}.\ere

\section{Applications}

Theorem \ref{mainaction} and its corollary, Burnside's formula, have compelling applications. We develop here three of those.

\subsection{Akita's Formula \cite{akita}}\label{akita}
When $X$ is a CW complex, stratified by its cells, then a discrete group $\Gamma$ has a stratified action if and only if it has a CW-action. This means $\Gamma$ permutes cells and has finite stabilizers. Akita \cite{akita} calls such an action $\Gamma$-finite if the quotient is a finite complex. In this case, he defines the \textit{orbifold Euler characteristic}
\begin{equation}\label{orbeuler}
e(\Gamma, X) = \sum_{\sigma\in\mathcal E}(-1)^{\dim\sigma}\frac{1}{|\Gamma_\sigma|}\in \bbq
\end{equation}
where $\mathcal E$ is a set of representatives of $\Gamma$-orbits of cells of $X$ and $\Gamma_\alpha$ is the stabilizer of a cell. One sets $e(\Gamma,\emptyset)=0$. It is to be noted that this quantity does not depend on the cell decomposition of $X$.
For each $\gamma\in\Gamma$, write the centralizer $C_\Gamma (\gamma)$.

\bco\label{akitatheorem} (Akita \cite{akita}) Let $\Gamma$ be a discrete group, and $X$ a $\Gamma$-finite CW-complex. Then
$$\chi (X/\Gamma) = \sum_{\gamma\in {\mathcal F}(\Gamma)}e(C_{\Gamma}(\gamma),X^{\gamma})$$
where $\mathcal F(\Gamma)$ is a set of representatives of conjugacy classes of elements of finite order in $\Gamma$.
\eco

\begin{proof}
This is an immediate application of the motivic morphism $\langle\ \rangle$ applied to our formula in Theorem \ref{mainaction}. In this case $\langle X/\Gamma\rangle = \chi (X/\Gamma)$ since $X/\Gamma$ is compact, and $\langle S\rangle =\langle \sigma\rangle = (-1)^{\dim\sigma}$.
\end{proof}

The proof of Corollary \ref{akitatheorem} is in striking contrast with the proof given by Akita which takes many pages and involves the use of equivariant topology, homological algebra and spectral sequences. Clearly however, Theorem \ref{mainaction} was totally motivated by the work of Akita.

\subsection{Permutation Products \cite{macdo}}\label{permprod}

Let $G$ be a subgroup of the symmetric group on $n$-letter $\mathfrak S_n$, and consider the quotient $GP^n(X) := X^n/G$, with $G$ acting by permutation of entries. Note that if $X\in\bbm$, then $GP^n(X)\in\bbm$ \cite{kt}. The special case of $G=\mathfrak S_n$ gives the symmetric products, commonly denoted  by $\sp{n}(X)$, which are a key construction in both algebraic geometry and topology. In \cite{macdo}, I.G. MacDonald gave a formula for the Poincar\'e series of these spaces. We here recover his formula for the Euler characteristic by a shortened argument in $K_0(\bbm)$.

To express $[GP^n(X)]$ in terms of the invariants of $X$, we should observe first that this class depends strongly on the action of $G$, and not just on the abstract $G$. To see this, consider the subgroup $\bbz_2$ embedded in $\mathfrak S_4$ as either $G_1=\{e, (12)\}$ or as $G_2=\{(e, (12)(34)\}$. Then
$X^4/G_1 = \sp{2}(X)\times X^2$ while $X^4/G_2=\sp{2}(X\times X)$, which of course
are different spaces with two different classes.
What turns out to capture this difference when it comes to the Euler characteristic, or the Grothendieck class, is precisely the action of the group on its left cosets in $\mathfrak S_n$.

Let's recall some definitions.
Every permutation in $\mathfrak S_n=\hbox{Perm}\{1,\ldots, n\}$ has a ``disjoint cycle decomposition'' or ``cycle type'' which corresponds to
a product of disjoint $\lambda_i$-cycles.
To a cycle type we can then associate the unordered partition of $n$; $\lambda= \lambda_1+\lambda_2+\ldots +\lambda_k$ with length $|\lambda | = k$. The length $k$ is the number of disjoint cycles in the decomposition.

Now $G$ being a subgroup of $\mathfrak S_n$, then $G$ acts on its set of left cosets $\mathfrak S_n/G$ by left multiplication. We set then
$$\chi^G(\sigma)= |\left(\mathfrak S_n/G\right)^\sigma|$$
This is the number of left cosets fixed by $\sigma$.
This number only depends on the conjugacy class of $\sigma$, so in particular only depends on the cycle type decomposition of $\sigma$.
We can then define $\chi_{\lambda}^G$ to be $\chi^G(\sigma)$ where $\sigma$ is any permutation whose cycle decomposition corresponds to $\lambda$.
Note that
$\chi^G(\sigma)=0$ if $\sigma \in S_n$ has no conjugate in the subgroup $G$.

\bpr Let $G$ be a subgroup of $S_n$ acting on $X^n$ by permutations. Then
$$[GP^n(X)]=\frac{1}{n!}\sum_{\lambda}h_{\lambda}\chi_{\lambda}^G [X]^k$$
where $\lambda=(n_1\ldots,n_k)$, $n_1\geq n_2\geq\cdots\geq n_k$, is a partition of $n$, and $h_\lambda$ is the number of permutations of $\mathfrak S_n$ with cycle type $\lambda$.
\epr

\begin{proof}
We write $(X^n)^\sigma=Fix(\sigma)$. Note that $[F(t_j\sigma t_j^{-1})] = [F(\sigma )]$ since the fixed point sets are homeomorphic via the map
$F(\sigma )\rightarrow F(t_j\sigma t_j^{-1}), x\mapsto t_jx$. The idea of the proof next is to use the fact that $F$ behaves like a character (i.e. it is constant on conjugacy classes) so we can apply an analog of Lemma 6 of \cite{macdo}.

Suppose the partition of $n$ associated to the cycle type of $\sigma$ is $\lambda$, then $Fix(\sigma)\cong X^{|\lambda|}$.
Let $r$ be the index of $G$ in $\mathfrak S_n$, so that $n! = r|G|$. We start by rewriting Burnside's formula \eqref{burnside}
\begin{equation}\label{first}
[GP^n(X)]=\frac{1}{|G|}\sum_{\tau\in G} [(X^n)^\tau]
=\frac{1}{n!}\sum_{\tau\in G} r. [Fix(\tau)]
\end{equation}
List the left cosets as in $t_1G,\ldots, t_rG$. If $\sigma$ is
any element of $\mathfrak S_n$, $\chi^G(\sigma)$ is the number of suffixes j for which $\sigma t_jG = t_jG$ i.e. for which $\sigma\in G^{t_j}$.
Now pick $\sigma\in\mathfrak S_n$. If $\sigma$ is not conjugate to any $g\in G$, then $\chi^G(\sigma) = 0$. If it is, then
$t_j\sigma t_j^{-1} = \tau_j\in G$ for some $j$, or $\sigma = t_j^{-1}\tau_j t_j$, and there are
$\chi^G(\sigma)$-as many such $j$'s.
This immediately implies the second equality below
\begin{equation}\label{second}
\sum_{\tau\in G} r. [Fix(\sigma)]=
\sum_{\tau\in G}\sum_{j=1}^{r} [Fix(t_{j}^{-1}\tau t_{j})] =
\sum_{\sigma\in S_n}\chi^G(\sigma)[Fix(\sigma)]
\end{equation}
By combining \eqref{second} with \eqref{first} we obtain the desired formula
\end{proof}

\bex When $G=\bbz_n$ is the cylic group on $n$-letters. The quotient $GP^n(X)=\cp{n}(X)$ is the so-called cyclic product. Let $\bbz_n$ be generated by $\sigma$ of order $n$.
The cycle type of $\sigma$ is $n$ (one block-partition). If $k$ is an integer and $d=gcd(k,n)$, then the cycle type of $\sigma^k$ is ${n\over d}+\cdots + {n\over d}$ ($d$-times), and the number of permutations in $G$ having this cycle type is $\phi(d)$, where $\phi$ is Euler's function. Consequently
(see \cite{macdo, kt})
$$\displaystyle [CP^n(X)] = \frac{1}{n}\sum_{d|n}\phi(d)[X]^{\frac{n}{d}}$$
\eex

\subsection{Crystallographic Groups} \label{crystal}

This section is motivated by the recent article \cite{bartosz}.
Let $Isom(n)$ be the group of affine isometries of $\bbr^n$. As is known, this is a semi-direct product of $O_n(\bbr)$ and the translation group $T\cong \bbr^n$. By definition, a Euclidean Crystallographic Group  (or ``ECG'') is a discrete subgroup of $Isom(n)$ having compact quotient. Being discrete means that no orbit of the action has an accumulation point.
If $\Gamma$ is an ECG, then $\Gamma$ acts properly discontinuously on $\bbr^n$ and the quotient has the structure of an orbifold. We use
\cite{farkas} as our basic reference on these groups (see also  \cite{burde}). The following main result is needed. For the definition of the orbifold Euler characteristic, see \eqref{orbeuler}.

\begin{theorem} (Theorem 4.1 \cite{bartosz})
The orbifold $\bbr^N/\Gamma$, obtained from the action of the crystallographic group $\Gamma$ on $\bbr^N$ for $N\geq 1$,
has an orbifold Euler characteristic equal to $0$.
\end{theorem}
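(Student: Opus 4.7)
The plan is to bootstrap from the identity-element summand in Theorem \ref{mainaction}, which (as the remark following the infinite dihedral example indicates) recovers the orbifold Euler characteristic after applying $\langle\cdot\rangle$, and then to reduce the crystallographic case to a free action of the translation subgroup via Bieberbach's first theorem.

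First I would endow $\bbr^N$ with a $\Gamma$-CW structure, which exists because $\Gamma$ acts properly discontinuously and cocompactly on $\bbr^N$; this gives an $\bbm$-stratification in which every stabilizer $\Gamma_S$ is finite. Applying Theorem \ref{mainaction} and the motivic morphism $\langle\cdot\rangle$ yields
$$\chi_c(\bbr^N/\Gamma) = \sum_{g\in F(\Gamma)}\sum_{S\in\mathcal E_g}\frac{\chi_c(S)}{|C_{\Gamma_S}(g)|}.$$
Since $C_{\Gamma_S}(id) = \Gamma_S$ and $\chi_c$ of an open cell is $(-1)^{\dim}$, the summand corresponding to $g = id$ is exactly $\sum_{S\in\mathcal E_{id}}(-1)^{\dim S}/|\Gamma_S| = e(\Gamma,\bbr^N)$ of formula \eqref{orbeuler}. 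It thus suffices to show that this partial sum vanishes.

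By Bieberbach's first theorem, $\Gamma$ contains a normal torsion-free subgroup $T$ of finite index $|P|$ acting on $\bbr^N$ by pure translations, with $T\cong\bbz^N$. Restricting the chosen $\Gamma$-CW structure to $T$ gives a $T$-CW structure in which every stabilizer is trivial: for any stratum $S$ one has $T_S = \Gamma_S\cap T = \{e\}$ because $\Gamma_S$ is finite and $T$ is torsion-free. Since $F(T) = \{id\}$, applying Theorem \ref{mainaction} to the $T$-action and then $\langle\cdot\rangle$ gives
$$e(T,\bbr^N) \;=\; \sum_{S'\in\mathcal E_{id}^T}\chi_c(S') \;=\; \chi_c(\bbr^N/T) \;=\; \chi_c(T^N) \;=\; 0.$$
The final step is to relate $e(T,\bbr^N)$ to $e(\Gamma,\bbr^N)$ by a multiplicativity argument using the same CW structure. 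A $\Gamma$-orbit $\Gamma\cdot S$ with (finite) stabilizer $\Gamma_S$ decomposes under the subgroup $T$ into $T$-orbits indexed by $\Gamma/(T\Gamma_S)$; since $T$ is normal and $\Gamma_S\cap T = \{e\}$, one computes $|\Gamma/(T\Gamma_S)| = [\Gamma:T]/|\Gamma_S| = |P|/|\Gamma_S|$. Each of these $T$-orbits has a representative homeomorphic to $S$, hence the same $\chi_c$, and trivial $T$-stabilizer, so the contribution of $\Gamma\cdot S$ to $e(T,\bbr^N)$ is $(|P|/|\Gamma_S|)\chi_c(S) = |P|\cdot\bigl(\chi_c(S)/|\Gamma_S|\bigr)$, which is $|P|$ times its contribution to $e(\Gamma,\bbr^N)$. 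Summing over $\Gamma$-orbits gives $e(T,\bbr^N) = |P|\,e(\Gamma,\bbr^N)$, and combining with $e(T,\bbr^N)=0$ forces $e(\Gamma,\bbr^N) = 0$.

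The main obstacle I anticipate is the double-coset bookkeeping in the final multiplicativity step; everything else is either the machinery already developed in the paper or the standard structure theory of crystallographic groups. An alternative one could explore — should the multiplicativity argument prove delicate in some exotic stratified setting — is to exhibit an explicit $\Gamma$-invariant cubical decomposition coming from a $T$-fundamental parallelepiped and compute both sides directly, but the abstract argument above should suffice.
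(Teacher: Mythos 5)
The paper does not actually prove this statement: it is imported verbatim as Theorem 4.1 of \cite{bartosz} and used as a black box in the proof of Theorem \ref{crystal}, so there is no internal proof to compare against. Your argument, however, is correct and essentially self-contained, and it is close in spirit to the argument in \cite{bartosz} itself (multiplicativity of the orbifold Euler characteristic over the finite-index translation lattice, whose quotient is a torus). The two substantive steps both check out: (a) by Bieberbach, the translation subgroup $T\cong\bbz^N$ is normal, torsion-free and of finite index, so $T_S=\Gamma_S\cap T=\{e\}$ for every cell $S$ and $e(T,\bbr^N)=\chi(\bbr^N/T)=\chi(\hbox{$N$-torus})=0$; (b) the double-coset count is right: since $T$ is normal, $T\backslash\Gamma/\Gamma_S\cong\Gamma/(T\Gamma_S)$ has cardinality $[\Gamma:T]/[T\Gamma_S:T]=[\Gamma:T]/|\Gamma_S|$ by the second isomorphism theorem (an integer, since $\Gamma_S$ embeds in the point group $P=\Gamma/T$), giving $e(T,\bbr^N)=|P|\,e(\Gamma,\bbr^N)$ and hence the vanishing. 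Two minor remarks: the opening paragraph invoking Theorem \ref{mainaction} is dispensable, since the quantity $e(\Gamma,\bbr^N)$ of \eqref{orbeuler} is defined combinatorially and your steps (a) and (b) address it directly; and in your fallback suggestion, a fundamental parallelepiped for $T$ will not in general yield a $\Gamma$-invariant decomposition, so the abstract multiplicativity argument is the one to keep. The payoff of your route is that it makes the paper's treatment of crystallographic quotients (Theorem \ref{crystal}) self-contained rather than dependent on \cite{bartosz}.
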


By combining this theorem with the formula of Akita, we will derive a formula the classical Euler characteristic of the  crystallographic quotient $\bbr^n/G$. We know of no reference to this result in the literature. As before, we write
$C_\Gamma (\gamma)$ the centralizer of $\gamma$ in $\Gamma$.

We call a \textit{central-isometry} any isometry $\gamma$ such that $X^\gamma$ is reduced to a single point. This is generally given by a rotation composed with a reflection whose hyperplane is orthogonal to the axis of rotation, at least in dimension 3. The algebraic characterization of such isometries is as follows: write any such transformation $F$ as a composition $F =  tr_{v}\circ\phi$, where $tr_v$ is translation by a vector, and $\phi$ is a linear transformation (i.e. \textit{the linear part of $F$}). Then such an $F$ has a unique fixed point if and only if $\phi - Id$ is an isomorphism of $\bbr^n$ \cite{ryan}.



\begin{theorem}\label{crystal} Let $\Gamma$ be an ECG acting on $\bbr^n$, $n\geq 1$. If $\Gamma$ contains no central isometries (i.e. elements with a single fixed point), then $\chi (\bbr^n/\Gamma) = 0$. Otherwise, let $\mathcal R(\Gamma)\neq\emptyset$ be the set of representatives of conjugacy classes of all central isometries.  Then $$\chi \left(\bbr^n/\Gamma\right) = \sum_{\gamma\in \mathcal R(\Gamma)}{1\over |C_\Gamma (\gamma)|}$$
Moreover, $[\bbr^n/\Gamma] = \chi (\bbr^n/\Gamma)[pt]$.
\end{theorem}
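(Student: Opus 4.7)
The strategy is to apply Theorem \ref{mainaction} to $X=\bbr^n$ with $G=\Gamma$, and then invoke the Naskrecki vanishing theorem (Theorem 4.1 of \cite{bartosz}) on each term. First fix a $\Gamma$-invariant cell stratification $\mathcal{S}$ of $\bbr^n$ in which every affine fixed subspace $(\bbr^n)^g$, for $g\in F(\Gamma)$, is a union of open cells. Such an $\bbm$-stratification is obtained from the $\Gamma$-translates of a fundamental polytope for $\Gamma$, suitably subdivided so that the finitely many distinct affine subspaces $(\bbr^n)^g$ (up to translation) appear as subcomplexes. Because $\bbr^n/\Gamma$ is compact and cell stabilizers are finite subgroups of $O(n)$, the action is of finite type in the sense of the previous section, so Theorem \ref{mainaction} applies and yields
$$[\bbr^n/\Gamma] = \sum_{g\in F(\Gamma)}\sum_{S\in\mathcal{E}_g}\frac{[S]}{|C_{\Gamma_S}(g)|}.$$

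Next invoke the identity $[\bbr]=-[pt]$ in $K_0(\bbm)$, which follows from the LC-decomposition $\bbr=\bbr_{<0}\sqcup\{0\}\sqcup\bbr_{>0}$, giving $[\bbr]=2[\bbr]+[pt]$. By multiplicativity, each open cell $S$ of dimension $d$ satisfies $[S]=(-1)^d[pt]$, so $[pt]$ factors out and
$$[\bbr^n/\Gamma]=[pt]\cdot\sum_{g\in F(\Gamma)}e\bigl(C_\Gamma(g),(\bbr^n)^g\bigr),$$
where the inner sum is the orbifold Euler characteristic of \eqref{orbeuler}.

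Now classify finite-order $g$ by $\dim(\bbr^n)^g$. For $g=id$, the contribution is $e(\Gamma,\bbr^n)=0$ directly by Naskrecki. For $g$ a central isometry, $(\bbr^n)^g$ reduces to a single point stabilized by $C_\Gamma(g)$, contributing $1/|C_\Gamma(g)|$. In the remaining intermediate case $1\le\dim(\bbr^n)^g<n$, the centralizer $C_\Gamma(g)$ acts on $(\bbr^n)^g\cong\bbr^k$ by affine isometries; its kernel $N$ (elements fixing $(\bbr^n)^g$ pointwise) embeds as a discrete subgroup of $O(n-k)$ and is therefore finite, and the quotient $C_\Gamma(g)/N$ acts as a crystallographic group on $(\bbr^n)^g$. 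Naskrecki then gives $e(C_\Gamma(g)/N,(\bbr^n)^g)=0$, and since a trivially-acting subgroup scales orbifold Euler characteristic by $|N|^{-1}$, we obtain $e(C_\Gamma(g),(\bbr^n)^g)=0$.

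Summing the nonvanishing contributions yields
$$[\bbr^n/\Gamma]=\left(\sum_{\gamma\in\mathcal{R}(\Gamma)}\frac{1}{|C_\Gamma(\gamma)|}\right)[pt],$$
proving the ``moreover'' statement. The Euler characteristic identity follows by applying $\langle\ \rangle$, using $\langle[pt]\rangle=1$ and $\langle[\bbr^n/\Gamma]\rangle=\chi_c(\bbr^n/\Gamma)=\chi(\bbr^n/\Gamma)$ (compactness of the quotient). The main obstacle is the intermediate case: while finiteness of $N$ is essentially immediate, verifying that $C_\Gamma(g)/N$ is genuinely crystallographic on $(\bbr^n)^g$---in particular that $(\bbr^n)^g/C_\Gamma(g)$ is compact---will require invoking the classical structure theory of centralizers of torsion elements in crystallographic groups (this is the ``restricted crystallographic'' action on fixed-subspace slices).
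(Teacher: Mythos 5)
Your proof follows essentially the same route as the paper's: reduce via Theorem \ref{mainaction} (equivalently Akita's formula, Corollary \ref{akitatheorem}) to a sum of orbifold Euler characteristics $e(C_\Gamma(g),(\bbr^n)^g)$ over $g\in F(\Gamma)$, kill every term with $\dim (\bbr^n)^g\geq 1$ by Naskrecki's vanishing theorem, and read off the contribution $1/|C_\Gamma(\gamma)|$ from the central isometries, with the ``moreover'' clause coming from $[S]=(-1)^{\dim S}[pt]$ for cells. The single step you flag as an obstacle --- cocompactness of $C_\Gamma(g)$ acting on $(\bbr^n)^g$ --- is precisely what the paper asserts without proof (``the action of $C_\Gamma(\gamma)$ on $X^\gamma$ is also an ECG action''); it does hold because, writing $A$ for the linear part of $g$ and $L$ for the translation lattice of $\Gamma$, the translations by $L\cap\ker(A-I)$ commute with $g$ and form a full lattice in the direction space of $(\bbr^n)^g$ (since $A$ has finite order and preserves $L$), and your explicit handling of the ineffective kernel $N$ is in fact more careful than the paper's.
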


\begin{proof}
We will be using Akita's formula (Corollary \ref{akita}). To that end, we need to argue that $\bbr^n$ is a $\Gamma$-finite CW-complex (see \S\ref{akita}). This is a consequence of a construction attributed to
Dirichlet and Voronoi of a compact fundamental domain which is a polytope and which is then propagated by the
action of $\Gamma$. For more general results on fundamental domain, see Illman (\cite{illman}, part II).

We need understand $X^\gamma$ for $\gamma\in \Gamma$. The fixed points of elements in the isometry group of $\bbr^n$ are affine subspaces of $\bbr^n$. Moreover, the action of $C_\Gamma (\gamma )$ on $X^\gamma$ is also an ECG action, and is compatible with the cell stratification.
So whenever $X^\gamma$ is not reduced to a point, each term
$e(C_{\Gamma}(\gamma),X^{\gamma})$ is an orbifold characteristic of an ECG, and thus is zero by Theorem 4.2 of \cite{bartosz}. When $X^\gamma$ is reduced to point, then $ e(C_\Gamma(\gamma), X^\gamma) = {1\over |Stab_{X^\gamma}|}$, where
the denominator is the cardinality of the stabilizer of the unique fixed point of $\gamma$, in the group $C_\Gamma(\gamma)$. This is clearly the entire group, since any element that commutes with $\gamma$ leaves  $X^\gamma$ invariant, and since $X^\gamma$ consists of a single point, it must fix it. So $\displaystyle {1\over |Stab_{X^\gamma}|} = {1\over |C_\Gamma(\gamma)|}$. Now we take the sum over all such $\gamma$ running over representatives of conjugacy classes (Corollary \ref{akita}).

Finally, since $\bbr^n$ is a $\Gamma$-finite cell complex, which means that for each cell $S$, $[S]= (-1)^{\dim S}[pt]$, we see that $[\bbr^n/\Gamma] = \chi_c(\bbr^n/\Gamma)[pt]$. But $\bbr^n/\Gamma$ is compact,  so we can replace $\chi_c$ by $\chi$ in this case.
\end{proof}





\section{Polyhedral products}\label{poly}

Polyhedral products are a special family of simplicial complexes which have been extensively studied in recent years. To define them, start with $K$ an abstract simplicial complex on the vertex set $\Omega:= \{1,\ldots, n\}$. Given a pair of pointed spaces $(X,A)$, and a simplex $\sigma\in K$, $\sigma = [i_1,\ldots, i_k]$, define the subspace of $X^n$
\begin{equation}\label{poly1}
(X,A)^\sigma = \prod_{i=1}^n Y_i\ \ \ \hbox{where}\ \ \ Y_i=\begin{cases}X& \hbox{if}\
i\in\sigma\\
A&\hbox{if}\ i\not\in\sigma\end{cases}
\end{equation}
Note that with this definition, if $\tau\subset\sigma$, then $(X,A)^\tau\subset (X,A)^\sigma$.
Note also that for each $\sigma$, $(X,A)^\sigma\cong X^{|\sigma|}\times A^{|n-\sigma|}$.
Next one defines the ``polyhedral product" determined by $(X,A)$ and $K$ as
$$(X,A)^K = \bigcup_{\sigma\in K}(X,A)^\sigma \subset X^n$$
If $K=\emptyset$, then $(X,A)^\emptyset=A^n$, and if
$K=\{v_0,\ldots, v_n\}$ (the vertices) and $A=*$ is a point, then $(X,*)^K$ is the wedge $\bigvee^nX$.

\begin{figure}[htb]\label{Kcomplex}
\begin{center}
\begin{tikzpicture}[scale=0.40]
\fill[line width=1pt,color=black,fill=gray, fill opacity=0.4] (1,2) -- (0,-1) -- (5,0) -- cycle;
\draw [line width=1pt] (1,2)-- (0,-1);
\draw [line width=1pt] (0,-1)-- (5,0);
\draw [line width=1pt] (5,0)-- (1,2);
\draw [line width=1pt] (5,0)-- (4,4);
\draw [line width=1pt,color=black] (5,0)-- (7,4);
\draw [fill=black] (1,2) circle (2pt);
\draw[color=black] (1,2.3) node {$2$};
\draw [fill=black] (0,-1) circle (2pt);
\draw[color=black] (0,-1.3) node {$1$};
\draw [fill=black] (5,0) circle (2pt);
\draw[color=black] (5,-0.3) node {$3$};
\draw [fill=black] (4,4) circle (2pt);
\draw[color=black] (4,4.3) node {$5$};
\draw [fill=black] (7,4) circle (2pt);
\draw[color=black] (7,4.3) node {$4$};
\end{tikzpicture}
\caption{The depicted $K$ has vertex set $\{1,\ldots, 5\}$.
Simplexes of $K$ are $[1,2,3],[3,5],[3,4]$ together with their various faces. We have
$(X,A)^K  = (X,A)^{[1,2,3]}\cup (X,A)^{[3,5]}\cup X^{[3,4]}
\cong (X^3\times A^2)\cup (A^3\times X^2)\cup (A^3\times X^2)$, and (see Example \ref{kcomplexchi})
$$\chi (X,A)^K = \chi(X)^3\chi(A)^2+2\chi(X)^2\chi (A)^3 - 2\chi (X)\chi (A)^4$$
}
\end{center}
\end{figure}
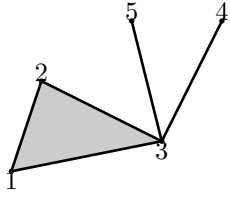

\bex\label{fat}
The fat wedge $W_d(X,n)$ is the subset of $X^n$ of all tuples with at least $d$ entries at basepoint, with $0\leq d\leq n$. Clearly $W_0(X,n)=X^n$, $W_n(X,n) = *$ and $W_1(X,n)$ is the standard one point union or wedge $\displaystyle\bigvee^nX$ in $X^n$.
This fat wedge $W_d(X,n)$ can be identified with $(X,*)^K$ where $K$ is the $n-1-d$ skeleton of the $n$-dimensional complex $\Delta_{n-1}$.
\eex

The expression for the Euler characteristic of $(X,A)^K$ for $X$ any finite CW complex appears in \cite{davis}. Below is a quick derivation of the main theorem of \cite{davis}, which was stated more restrictively
in terms of $\chi$ and for finite CW pairs $(X,A)$.

\bpr\label{chipolyhedral} Let $(X,A)$ be a pair in $\bbm$, and $K$ an abstract finite simplicial complex. Then
\begin{equation*}
\left[ (X,A)^K\right] = \sum_{\sigma\in K}([X] - [A])^{|\sigma|}[A]^{n-|\sigma |}
\end{equation*}
the sum being over all simplices of $K$ including the empty set,
$[\emptyset]=0$. Here $|\sigma|$ is the number of vertices of the simplex $\sigma$.
When $(X,A)$ is a CW-pair, then we recover
$$\chi (X,A)^K=\sum_{\sigma\in K}(\chi (X) - \chi (A))^{|\sigma|}\chi (A)^{n-|\sigma |}$$
\epr

\begin{proof} Set
$A_\sigma := (X,A)^\sigma$. Notice that
\begin{equation}\label{property}
A_{\sigma_1}\cap A_{\sigma_2}=A_{\sigma_1\cap\sigma_2}
\end{equation}
We can set
$A^+_\sigma : = A_\sigma - \bigcup_{\tau\subset\sigma}  A_{\tau}$,
and obtain the stratification
$(X,A)^K=\bigsqcup_{\sigma\in K}A^+_\sigma$.
If $\sigma = [i_1,\ldots, i_k]$, $i_1<i_2<\cdots < i_k$, then clearly
$A_\sigma :=(X,A)^\sigma \cong X^{|\sigma|}\times A^{n-|\sigma|}$,
since this is the subset of $X^n$ consisting of tuples where
entries are in $X$ in the
$i_1,\ldots, i_k$ positions, and in $A$  in the remaining positions.
Replacing a copy of $X$ by $A$ lands immediately in
$(X,A)^\tau$ where $\tau$ is a face of $\sigma$. This means precisely that
\begin{eqnarray*}
A_\sigma^+
\cong
(X\setminus A)^{|\sigma|}A^{n-|\sigma|}
\end{eqnarray*}
so that $[A^+_\sigma] =  [X\setminus A]^k [A]^{n-k}
= ([X]- [A])^{|\sigma|}[A]^{n-|\sigma|}$. We now
sum up over all $\sigma\in K$.
\end{proof}

\bex
By identifying $W_d(X,n)$ with $(X,*)^K$ where $K$ the $n-1-d$ skeleton of $\Delta_{n-1}$ (Example \ref{fat}), we obtain
$$\chi (W_d(X,n)) =\sum_{i=d}^{n}{{n\choose i }(\chi-1)^{n-i}}$$
In particular $\chi (\bigvee^nX)=n\chi (X) -n+1$ as is well-known.
\eex

\subsection{The Poset of face intersections}\label{mobius}

We now look at the complement of $(X,A)^K$ in $X^n$, where $K$ is an abstract simplicial complex on $n$ vertices. Obviously in this case,
$$[X^n\setminus(X,A)^K]
 = [X^n]-[(X,A)^K]=[X]^n-\sum_{\sigma\in K}([X]-[A])^{|\sigma|}[A]^{n-|\sigma |}$$
We relate this formula to a special poset associated to $K$, and give therefore an alternate derivation of Proposition \ref{chipolyhedral} or, the other way around, an easy way to compute the Mobius function of this poset.
For all definitions on poset topology, Mobius function and inversion, we refer to \cite{wachs} and also \cite{kt}.

The idea here is to think of $(X,A)^K$ as an arrangement of subspaces $(X,A)^\sigma$ in $X^{|K|}$, as $\sigma$ ranges over $K$, and then work with its poset of intersections $P_K$, ordered by reverse inclusion. We only need to look at the maximal faces, since $(X,A)^\tau\subset (X,A)^\sigma$, whenever $\tau\subset\sigma$. The poset starts with the bottom element $\hat 0$ (this corresponds to $X^{|K|}$, where all the subspaces lie), then the atoms (the elements that cover $\hat 0$) are the maximal faces of $K$. The next stage consists of elements that are the single intersections
$\sigma_1\cap\sigma_2$, etc. This is a ranked poset which at level $k$ is given by the intersection of $k$ maximal faces. The top element of the poset is $\hat 1$ and this corresponds to the intersection of all maximal facets. This intersection can be empty.

Associated to this poset $P_K$, there is a Mobius function $\mu_K (x)$, $x\in P_K$ which starts with $\mu (\hat 0) =1$,
$\mu_K (\sigma_i) = -1$, and the rest is constructed inductively according to
$$\mu_K (x) = -\sum_{y<x}\mu_K (y)$$
The following ``inclusion-exclusion'' principle is now a direct consequence of the theory of complements of arrangements (see Theorem 9.1 of \cite{kt})

\bpr\label{polyinc} Let $(X, A)$ be a pair in $\bbm$, and $K$ an abstract finite simplicial complex. Then the class of the complement is
$$[X- (X,A)^K] = \sum_{\sigma\in P_K\atop \hat 0\leq \sigma\leq \hat 1} \mu_K(\sigma)[X]^{|\sigma|} [A]^{n-|\sigma|}$$
The class $[(X,A)^K]$ is obtained by subtracting the term on the right from $[X]^{|K|}$ (this formula is the ``inclusion-exclusion formula'' for the arrangement of maximal faces).
\epr

\begin{proof}
Let $\mathcal A=\{ A_i\}_{i\in \Omega}$ be an arrangement in $Y$, where each pair $(Y,A_\alpha)$ is in $\bbm$. By Theorem 9.1 of \cite{kt}, $Y\setminus \bigcup_iA_i$ is in $\mathcal M$ and we have the general formula $\left[Y\setminus \bigcup A_i\right]
=\sum_{\hat{0}\leq \alpha\leq \hat{1}}\mu(\hat{0},\alpha)[A_\alpha]$, where $\mu$ is the Mobius function of the poset of intersections of $\mathcal A$. Here $\hat 0$ is $Y$ and $\hat 1$ is $\bigcap A_\alpha$. We apply this formula to $Y=X^{|K|}$ and to the arrangement of $A_\alpha = (X,A)^\alpha= X^{|\alpha|}\times A^{|K|-|\alpha|}$, where $\alpha$ is a maximal face of $K$.
\end{proof}

We illustrate this result in details for the complex in Fig.2.

\bex\label{kcomplexchi} Fig. 3 illustates both $P_K$ and $\mu_K$ for the complex in Fig. 2. The values of the Mobius function are indicated under each poset entry. The computation of $[X-(X,A)^K]$ is given as follows
\begin{eqnarray*}
[X-(X,A)^K]&=&[X]^5 - [(X,A)]^{[123]} - [(X,A)]^{[34]}-[(X,A)]^{[35]}+2[(X,A)]^{[3]}\\
&=&[X]^5 -[X]^3[A]^2-2[X]^2[A]^3+2[X][A]^4
\end{eqnarray*}
which gives immediately the inclusion-exclusion formula
\begin{equation}\label{formula1}
[(X,A)^K] = [X]^3[A]^2+2[X]^2[A]^3-2[X][A]^4\end{equation}
This must be checked against the formula in Proposition \ref{chipolyhedral} where $K$ is given by $$\{[123], [12], [13], [23], [34], [35], [1], [2], [3], [4], [5],\emptyset\}$$ Summing up the terms
\begin{eqnarray}\label{formula2}
\left[ (X,A)^K\right] &=& \sum_{\sigma\in K}([X] - [A])^{|\sigma|}[A]^{n-|\sigma |}\nonumber\\
&=& ([X]-[A])^3[A]^2+5([X]-[A])^2[A]^3
+5([X]-[A])[A]^4 + [A]^5
\end{eqnarray}
which reduces to \eqref{formula1}.
\begin{figure}[htb]\label{mobius}
\begin{center}
\epsfig{file=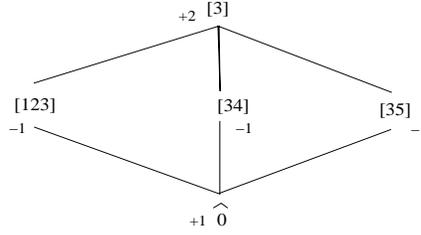,height=1.2in,width=2.2in,angle=0.0}
\caption{The poset $P_K$ and its mobius function $\mu_K$ for $K$ the complex in Fig. 2.
}
\end{center}
\end{figure}
\eex


\section{Polyhedral Configuration Spaces}\label{poly2}

Unlike polyhedral products,
``Polyhedral configuration spaces'',
or ``simplicial configuration'' spaces, have only been considered in a handful of references \cite{cooper, iriye, labassi} and they form an interesting subject of research.
This is a special family of diagonal arrangements, and our terminology ``polyhedral'', instead of ``simplicial configuration spaces'' \cite{cooper}, comes from the fact that the construction of these spaces is totally analogous to that of  polyhedral products, and in some cases, they  have isomorphic lattices of intersections. Partial computations of their Euler characteristics are in \cite{labassi, iriye}. We give a streamlined approach to these computations and then generalize them in this section.

Let $K$ be an abstract simplicial complex on the vertex set $\Omega:= \{1,\ldots, n\}$.
This is a collection of subsets of $\Omega=\{1,\ldots, n\}$ closed under inclusion.
Let $\sigma = \{i_1,\ldots, i_k\}\in K$,  and define
$$\Delta_\sigma (X) := \{(x_1,\ldots, x_n)\in X^n\ |\
x_{j_1}=\cdots = x_{j_{n-k}}\ \hbox{for}\ \{j_1,\ldots, j_{n-k}\} = \{i_1,\ldots, i_k\}^c\}$$
where $\{i_1,\ldots, i_k\}^c$ means the complement in $\Omega$.
Note that equating entries indexed over complements ensures that if $\sigma_1\subset\sigma_2$, then
$\Delta_{\sigma_1} (X)\subset \Delta_{\sigma_2} (X)$.
Similar to polyhedral products, we then define the
\textit{polyhedral arrangement}
\begin{equation*}
 \Delta_K(X) = \bigcup_{\sigma\in K}\Delta_\sigma (X) \ \ \ \ \ ,\ \ \ \sigma\ \hbox{simplex of $K$}
\end{equation*}
This is then a union of spaces each homeomorphic to a certain product of $X$'s.
When $X$ is a simplicial complex, by passing to a barycentric subdivision, $\Delta_K(X)$ becomes a simplicial subcomplex of $X^{|K|}$.

For convenience, we write $\Delta_\sigma (X)=\Delta_\sigma$ when no confusion arises.

\bre By construction,
\begin{equation}\label{beware}
\Delta_{\sigma_1\cap \sigma_2}\subset\Delta_{\sigma_1}\cap \Delta_{\sigma_2}
\end{equation}
but, unlike polyhedral products  \eqref{property}, this inclusion can be strict.
As an example, let $K=\Delta_3$ be the three dimensional simplex, let
$\sigma_1=\{1,2\}$ and let $\sigma_2=\{3,4\}$, then
$\Delta_{\sigma_1\cap\sigma_2}=\Delta_{\emptyset}=\{(x,x,x,x), x\in X\}$, while
$\Delta_{\sigma_1}\cap \Delta_{\sigma_2}=\{(x,x,y,y), x,y\in X\}$.
\ere

\bde The complement of the polyhedral arrangement in $X^n$ is written as in \cite{cooper}
$$M(K,X) := X^{|K|} -\Delta_K(X)$$
It can be thought of as a generalized configuration space \cite{petersen, kt}.
\ede

\bex Let $K=\Delta_{n-1}$ be the $n-1$-dimensional simplex with $n$ vertices. Then $\Delta_K(X)=X^n$. More generally,
if $K^{(d)}$ is the skeleton of dimension $d$ of $K$, then $\Delta_K(X)$ is the union in $X^{n}$ of all diagonals having one entry repeating $n-d$ times.
\eex

\bex Consider the complex $K$ in Fig. \ref{Kcomplex}. Then
\begin{eqnarray}\label{example}
\Delta_K(X) &=& \Delta_{[1,2,3]}(X)\cup \Delta_{[3,4]}(X)\cup\Delta_{[3,5]}(X)\\
&=& \{(a,b,c,x,x)\}\cup \{(y,y,a,b,y)\}
\cup\{(z,z,a,z,b)\}\nonumber
\end{eqnarray}
\eex

The space $\Delta_K(X)$ has associated to it a poset of intersections, where spaces of the posets are the various intersections $\bigcap_i \Delta_{\sigma_i}(X)$, and the partial order is given by subset inclusion. If $K=\Delta_{n-1}$ is the $n-1$-dimensional complex (with $n$-vertices), then $\Delta_K(X) = X^n$.
This space is naturally stratified by the
\begin{equation}\label{+const}
\Delta^+_\sigma = \Delta_\sigma - \bigcup_{\emptyset\neq\tau\in K}\Delta_\tau\cap\Delta_\sigma
\end{equation}

\begin{proposition}\label{mainpoly}
Suppose $K=K_1\sqcup K_2\sqcup\cdots\sqcup K_N$ is the disjoint union of $N$ non-empty complexes. We suppose that each $K_i$ is not reduced to a single simplex, and that $N\geq 3$. Then
$$[\Delta_K(X)] = \sum_i[\Delta_{K_i}(X)]- (N-1)[X]$$
\end{proposition}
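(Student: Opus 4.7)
My plan is to apply inclusion-exclusion to the natural decomposition of $\Delta_K(X)$ coming from the disjoint-union structure of $K$. First I would observe that every simplex of $K$ belongs to exactly one $K_i$, whence
$$\Delta_K(X) \;=\; \bigcup_{i=1}^N \Delta_{K_i}(X),$$
each $\Delta_{K_i}(X) = \bigcup_{\sigma \in K_i} \Delta_\sigma(X)$ being viewed as a subspace of the common ambient $X^n$. Under the hypothesis that no $K_i$ is reduced to a single simplex (so every $\sigma \in K_i$ is a proper subset of $\Omega_i$, forcing $[\Delta_\sigma(X)] = [X]^{|\sigma|+1}$ regardless of the surrounding ambient), this agrees with the intrinsic Grothendieck class of $\Delta_{K_i}(X)$ as a polyhedral arrangement on its own vertex set.

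The crux is the intersection claim: for every $I \subset \{1, \ldots, N\}$ with $|I| \geq 2$,
$$\bigcap_{i \in I} \Delta_{K_i}(X) \;=\; \Delta_\emptyset(X),$$
the small diagonal, of class $[X]$. The inclusion $\supseteq$ is automatic since $\Delta_\emptyset \subset \Delta_\sigma$ for every $\sigma$. For $\subseteq$, given a tuple in the intersection I would select, for each $i \in I$, a simplex $\sigma_i \in K_i$ with the tuple lying in $\Delta_{\sigma_i}$, so that the coordinates indexed by $\Omega \setminus \sigma_i$ share a common value $c_i \in X$. This is where the $N \geq 3$ hypothesis enters decisively: for any $i, j \in I$ one can pick $l \in \{1, \ldots, N\} \setminus \{i, j\}$, and since the $\Omega_m$ are pairwise disjoint, $\Omega_l \subset (\Omega \setminus \sigma_i) \cap (\Omega \setminus \sigma_j)$. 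The coordinates at $\Omega_l$ are therefore forced to equal both $c_i$ and $c_j$, yielding $c_i = c_j$. Iterating over all pairs, the $c_i$'s coincide and the tuple collapses to the small diagonal.

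Applying inclusion-exclusion in $K_0(\mathcal M)$, and using that every $k$-fold intersection with $k \geq 2$ has class $[X]$, we obtain
$$[\Delta_K(X)] \;=\; \sum_{i=1}^N [\Delta_{K_i}(X)] + [X]\sum_{k=2}^N (-1)^{k+1}\binom{N}{k}.$$
The identity $\sum_{k=0}^N (-1)^k\binom{N}{k}=0$ reduces the tail sum to $-(N-1)$, giving the claimed formula. The hard step is the intersection claim, where the $N \geq 3$ hypothesis is essential to supply a third vertex set $\Omega_l$ that pins down the $c_i$'s; the case $N = 2$ with two top-dimensional single simplices already shows that the pairwise intersection can become a higher-dimensional product diagonal rather than the small diagonal $X$, so the formula genuinely requires at least three pieces.
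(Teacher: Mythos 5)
Your overall architecture is the same as the paper's: write $\Delta_K(X)=\bigcup_i\Delta_{K_i\subset K}(X)$ inside the common ambient $X^n$, show that every intersection of two or more of these pieces is the small diagonal of class $[X]$, and apply inclusion--exclusion, with $\sum_{k\geq 2}(-1)^{k+1}\binom{N}{k}=-(N-1)$ producing the correction term. Your handling of the intersection claim is in fact more explicit than the paper's: the paper only asserts that the complements of $\sigma\in K_i$ and $\tau\in K_j$ must meet, while your choice of a third block $\Omega_l$ pins down the common value for arbitrary $|I|$-fold intersections. The binomial bookkeeping at the end is correct.

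The gap is in the sentence where you identify the classes of the embedded pieces $[\Delta_{K_i\subset K}(X)]$ (computed in $X^n$) with the intrinsic classes $[\Delta_{K_i}(X)]$ (computed in $X^{|\Omega_i|}$). Knowing that each individual stratum satisfies $[\Delta_\sigma(X)]=[X]^{|\sigma|+1}$ in both ambients does not determine the class of the union $\bigcup_{\sigma\in K_i}\Delta_\sigma(X)$: the intersection patterns must also agree, and they need not. Concretely, take $K_i$ on vertex set $\{1,2,3,4\}$ with maximal faces $\{1,2\}$ and $\{3,4\}$ (two maximal faces, so not a single simplex). Intrinsically, $\Delta_{\{1,2\}}\cap\Delta_{\{3,4\}}=\{(x,x,y,y)\}\cong X^2$ because the complements $\{3,4\}$ and $\{1,2\}$ are disjoint inside $\Omega_i$; embedded in $X^n$, both complements contain $\Omega\setminus\Omega_i$, so the intersection collapses to the small diagonal $\cong X$. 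The two unions then have classes $2[X]^3-[X]^2$ and $2[X]^3-[X]$ respectively, so the identification is not automatic. The paper addresses this step by exhibiting an explicit map $\Delta_{K_i}(X)\to\Delta_{K_i\subset K}(X)$ extending a tuple by the common value on $\Omega_i\setminus\sigma$ (nonempty by the ``no single simplex'' hypothesis) and asserting it is a homeomorphism; you should either supply such an identification and verify it is well defined on the overlaps $\Delta_\sigma\cap\Delta_{\sigma'}$ (precisely where the example above causes trouble), or state your conclusion with $[\Delta_{K_i}(X)]$ meaning the class of the embedded copy in $X^n$, for which your inclusion--exclusion argument is complete as written.
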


\begin{proof} Take two simplices $\sigma$ and $\tau$ from $K_i$ and $K_j$ respectively, $i\neq j$. We first observe that under the hypothesis of the Proposition, their complements in $K$ must intersect. Moreover, the complement of $\tau$ contains $\sigma$, and vice-versa, so $\Delta_{\sigma}(X)\cap\Delta_{\tau}(X)\cong X$.

Consider now $\Delta_K(X)$. Any simplex $\sigma\in K$ must be in some $K_i$ for some $i$. It is convenient to write
$$\Delta_{K_i\subset K}(X)= \bigcup_{\sigma\in K_i}\Delta_\sigma(X)\subset\Delta_K(X)\subset X^{|K|}$$
It is easy to see that this space is homeomorphic to
$\Delta_{K_i}(X)\subset X^{|K_i|}$. Indeed, consider the map
$\Delta_{\sigma\in K_i}(X)\rightarrow\Delta_{\sigma\in K} (X)$, which takes the tuple $(x_1,\ldots, x_{|K_i|})$, with $x_{i_1}=x_{i_2}=\cdots = x_{i_k}=x$ if $\{i_1,\ldots, i_k\}= \sigma^c$, and extends it to $X^{|K|}$ by adding entries equal to $x$. This map is homeomorphism since $\sigma^c\in K_i$ is non-empty by the hypothesis that no $K_i$ is reduced to a single simplex.
We can then write
\begin{equation}\label{union}\Delta_K(X)=
\bigcup\Delta_{K_i\subset K}(X)\cong
\bigcup \Delta_{K_i}(X)
\end{equation}
By our first observation at the start of the proof,
$\Delta_{K_i}(X)\cap\Delta_{K_j}(X)\cong X$, and in fact any number of intersections of the $\Delta_{K_i}(X)$, for distinct choices of $K_i$, is a copy of $X$.
Using the inclusion-exclusion principle to the union
\eqref{union}, we then obtain
\begin{eqnarray*}
[\Delta_K(X)] &=&\sum_i [\Delta_{K_i}(X)] - \sum_{i\neq j}[\Delta_{K_i}(X)\cap\Delta_{K_j}(X)] + \cdots \pm \left[\bigcap_{i=1}^N\Delta_{K_i}(X)\right]\\
&=&\sum_i [\Delta_{K_i}(X)] - {N\choose 2}[X] + {N\choose 3}[X]-\cdots - (-1)^N [X]\\
&=&\sum_i [\Delta_{K_i}(X)] - (N-1)[X] - \left(\sum_{i=0}^N (-1)^i{N\choose i}\right)[X]
\end{eqnarray*}
Since the latter term is zero, the claim follows.
\end{proof}

\subsection{The case  $2(\dim K+1) < n$}
We suppose $K$ is a complex on $n$ vertices, and  $2(\dim K+1) < n$. It is direct to check that in this case $\Delta_{\sigma_1\cap\sigma_2} (X)= \Delta_{\sigma_1} (X)\cap\Delta_{\sigma_2}(X)$ for any two simplexes $\sigma_1,\sigma_2$ of $K$, and thus we can rewrite  \eqref{+const} in the nicer form
\begin{equation}\label{condition}
\Delta^+_\sigma = \Delta_\sigma - \bigcup_{\tau\subsetneq\sigma}\Delta_\tau
\end{equation}
To compute the class of this stratum, we need the following lemma.

\ble\label{wn} For $X\in\bbm$ consider
the subspace $W_n:=\{(x_1,\ldots, x_n)\in X^n\ |\ x_i\neq x_n, \forall i\neq n\}$. Then
$$[W_n] = [X]([X]-1)^{n-1}$$
\ele

\begin{proof} When $X$ is a  manifold, the projection onto the last coordinates gives a bundle over $X$ with fiber
$(X^*)^{n-1}$ where $X^*$ is $X-pt$ (the homeomorphism type of the
punctured $X$ doesn't depend on the choice of the puncture
when $X$ is a manifold, compact or not). By the multiplicative property for bundles
$[W_n] = [X]([X]-1)^{n-1}$. From this we deduce the general case. Since $W_n$ is the complement of the diagonal arrangement $\{(x_1,\ldots, x_i,\ldots, x_{n-1},x_i)\}_{1\leq i\leq n-1}$, its class $[W_n]$ is a polynomial in $[X]$ of degree $n$ and so must be $[X]([X]-1)^{n-1}$.
\end{proof}

If $\sigma\in K$, we will write $|\sigma|$ its cardinality. We use the notation $[X]^{|\emptyset|}=1$.

\bpr\label{chiiriye}
Suppose $K$ is a simplicial complex on $n$ vertices, and $2(\dim K+1) < n$, then
$$[\Delta_K(X)] = [X][(X,*)^K] = [X]\left(1+\sum_{\emptyset\neq\sigma\in K}([X]-1)^{|\sigma|}\right)$$
\epr

\begin{proof}
Consider the intersection poset for
$\Delta_K(X)=\bigcup_{\sigma\in K}\Delta_\sigma (X)$. As indicated earlier, when
$2(\dim K+1)<n$, we have that
$\Delta_{\sigma}\cap\Delta_\tau=\Delta_{\sigma\cap\tau}$ and
$\Delta^+_\sigma = \Delta_\sigma - \bigcup_{\tau\subset\sigma}\Delta_\tau$.
Passing to $K_0(\bbm )$,
$[\Delta_K(X)] = \sum_{\sigma\in K} [\Delta^+_\sigma]$. But
$\Delta^+_\sigma \cong W_{|\sigma|+1}\subset X^n$ if $\sigma\neq\emptyset$.
We can then apply lemma \ref{wn} to obtain
$$[\Delta_K(X)] = [\Delta^+_\emptyset]+\sum_{\emptyset\neq\sigma\in K} [\Delta^+_\sigma]
=[X]+ \sum_{\emptyset\neq\sigma\in K}[X]([X]-1)^{|\sigma|}
$$
which is the desired formula.

Another nice argument of proof is to notice that there is a map $\Delta_K(X)\rightarrow (X,*)^K$ which is not continuous but which  can be used to stratify $\Delta_K(X)$ by strata which are bundles with fiber $X$ over strata of $(X,*)^K$. The condition $2(\dim K+1)<n$ is precisely the condition ensuring that preimages of points are copies of $X$.
\end{proof}

As a consequence, we recover one main corollary of \cite{iriye} which was obtained through sophisticated stable splitting arguments. Our formula is more general since it is not restricted to closed manifolds.

\bco\label{chipolyarrang} Let $X$ be a manifold of dimension $m$ (i.e. no boundary) with Euler characteristic $\chi:=\chi (X)$. Let $K$ be an abstract simplicial complex on $n$ vertices, and suppose $2(\dim K+1)<n$, then if we set
$M(K,X) := X^n - \Delta_K(X)$ as in \cite{cooper}
$$\chi (M(K,X)) = \chi^n - (-1)^{m(n+1)}\chi (1+\sum_{\emptyset\neq\sigma\in K}((-1)^m\chi-1)^{|\sigma|})$$
For $X$ closed manifold of odd dimension,
$\chi (X^n\setminus\Delta_K(X))=0$, and if $X$ is closed of even dimension
$\chi (X^n - \Delta_K(X)) = \chi^n - \chi (1+\sum_{\emptyset\neq\sigma\in K}(\chi-1)^{|\sigma|})$.
\eco

\bex We illustrate our calculations on the
the line graph $K$ on $n$ vertices, seen as a $1$-dimensional simplex with $n$-vertices and $n-1$ edges. Here $\dim K=1$.
When $n=2$, and $K$ is a segment with two vertices, the $\Delta_K(X)=X^2$. For $n\geq 5$, we use  Proposition \ref{chiiriye} to obtain
\begin{equation*}[\Delta_K(X)]=  [X]\left(1+n([X]-1) + (n-1)([X]-1)^2\right)
\end{equation*}
In fact this formula is valid for $n\geq 3$ as well since we can compute it directly, using the fact that
$\Delta_{[i]}^+\cong\conf (X,2)$,
 $1\leq i\leq n$, and $\Delta_{[i,i+1]}^+$ is stratified by one copy of $\conf (X,2)$ and one copy of $\conf (X,3)$, $1\leq i\leq n-1$.
\eex

\bco Let $X$ and $K$ as in Proposition \ref{chiiriye}, with $K$ not reduced to a single simplex, and let $\mu_K$ the poset of maximal face intersections associated to $K$.  Then
$$[M(K,X)] = \sum_{\sigma\in P_\Delta\atop\hat{0}\leq \sigma\leq \hat{1}}\mu_K(\sigma)[X]^{|\sigma|+1}$$
\eco

\begin{proof}
In the range $2(\dim K+1) < n$,  the polyhedral product describes an arrangement whose poset of intersections is $P_K$ (the maximal face poset).
By the same ``Inclusion-exclusion'' principle for arrangements (Theorem 9.1, \cite{kt}),   the class of the complement in $X^n$ is
\begin{equation}\label{grothcomparrange}
[M(K,X)] := \left[X^n\setminus \Delta_K(X)\right]
=\sum_{\tau\in P_\Delta\atop\hat{0}\leq \tau\leq \hat{1}}\mu_K(\tau)[\Delta_\tau]\\
\end{equation}
where $\mu_K$ is the Mobius function of $P_K$. This is clearly  analogous to the formula in Proposition \ref{polyinc} whereby we have replaced $[(X,A)]^\sigma$ by $[\Delta_\sigma]$. Since
$\displaystyle [\Delta_\sigma]=\begin{cases}[X]^{|\sigma|+1}&if\ K\neq \sigma \\
[X]^{|\sigma|}& if\ K=\sigma
\end{cases}$, we deduce the corollary.
\end{proof}

\section{Spaces of $0$-cycles}

In interesting recent work \cite{fww}, the authors introduce the space of $0$-cycles, which is a natural generalization of the space of rational maps (see Example \ref{rational}) and the divisor spaces in \cite{kallel2}. They show some suprising ``homological density'' results, one of which is stated as Theorem \ref{fww0} below. We give, in this final section, a  short and self-contained combinatorial derivation of an extended version of this Theorem, given in the context of the Grothendieck ring.

We first introduce the spaces in question.
Let $X$ be a connected locally compact space and fix $n,m\geq 1$. Let $\vec d$ denote a tuple of non-negative integers $(d_1,\ldots, d_m)\in\mathbb Z^m_{\geq 0}$ and let $|\vec d|:=\sum_id_i$. Let $\sp{\vec d}(X) = \prod_i\sp{d_i}(X)$. An element $\zeta$ in the symmetric product $\sp{n}(X)$ is referred to as a ``configuration of unordered points'' and is written conveniently as an abelian sum
$\zeta = \sum n_ix_i$, $\sum n_i=n$. The integer coefficient $n_i$ multiplying $x_i$ means that the point $x_i$ is listed $n_i$-times in $\zeta$ (or more if $x_j=x_i$ for some other $j$). An element of the product
$\sp{\vec d}(X)$ is therefore a ``multi-configuration'' of points.

Set $\vec d = (d_1,d_2,\ldots, d_m)$ and consider $\mathcal Z_n^{\vec d}(X)\subset\sp{\vec d}(X)$ the subset consisting of all multi-configurations $D$ of $|\vec d|$ (not necessarily distinct) points in $X$ such that:
\begin{itemize}
    \item precisely $d_i$ of the points in $D$ are labeled with the ``color'' $i$, and
    \item no point of $X$ is labeled with at least $n$ labels of every color.
\end{itemize}

In other words, $\mathcal Z_n^{\vec d}$ is the complement in $\sp{\vec d}(X)$ of the subspace
$$\{(nx + \zeta_1, nx+\zeta_2,\ldots, nx+\zeta_m)\ |\
\hbox{for some}\ x\in X, \zeta_i\in \sp{d_i-n}(X)\}$$
This is the subspace where at least one entry in each configuration has multiplicity $n$ (i.e. each color at that entry repeats $n$-times).
The following result is the main object of interest in this section.

\begin{theorem}\label{fww0}(\cite{fww}, Theorem 1.9) For $X$ a connected, oriented, smooth, even-dimensional manifold with $\dim H^*(X;\mathbb Q)<\infty$,
\begin{equation}\label{formula}
{\sum_{\vec d\in\bbz^m_{\geq 0}}\chi (\mathcal Z_n^{\vec d})x^{|\vec d|}\over
\sum_{\vec d\in\bbz^m_{\geq 0}}
\chi (\sp{\vec d}(X))x^{|\vec d|}} =  (1-x^{mn})^{\chi (X)}
\end{equation}
\end{theorem}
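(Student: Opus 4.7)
The plan is to lift the identity into the Grothendieck ring $K_0(\bbm)$ by means of a canonical decomposition of $\sp{\vec d}(X)$ that singles out $\mathcal Z_n^{\vec d}(X)$ as one stratum, and then extract the formula through a short generating-function calculation built on MacDonald's classical identity $\sum_k\chi(\sp k(X))t^k=(1-t)^{-\chi(X)}$. The core observation is that every multi-configuration $\zeta=(\zeta_1,\ldots,\zeta_m)\in\sp{\vec d}(X)$ has a unique decomposition $\zeta=n\delta+\eta$, in which $\delta\in\sp{k}(X)$ records the forced $n$-saturation common to all colors and $\eta$ is the admissible residue. Concretely, writing $m_i(x)$ for the multiplicity of $x\in X$ in $\zeta_i$ and setting $t(x):=\min_i\lfloor m_i(x)/n\rfloor$ (zero for all but finitely many $x$), one lets $\delta:=\sum_x t(x)\cdot x$ with $k=\sum_x t(x)$, and $\eta_i:=\zeta_i-n\delta$. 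A direct check shows $\eta\in\mathcal Z_n^{\vec d-nk\vec 1}(X)$, that $t(x)$ is recovered from $n\delta+\eta$ precisely because of the $\mathcal Z_n$-condition on $\eta$ (some color falls below $n$), and that the rule $(\delta,\eta)\mapsto(n\delta+\eta_1,\ldots,n\delta+\eta_m)$ is a continuous bijection onto $V_k:=\{\zeta:|\delta(\zeta)|=k\}$.

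First I would argue that $\{V_k\}_{k\geq 0}$ is an $\bbm$-stratification of $\sp{\vec d}(X)$ with $V_0=\mathcal Z_n^{\vec d}(X)$ and $V_k\cong \sp k(X)\times \mathcal Z_n^{\vec d-nk\vec 1}(X)$, which yields in $K_0(\bbm)$ the additive identity
$$[\sp{\vec d}(X)]=\sum_{k\geq 0}[\sp k(X)]\cdot[\mathcal Z_n^{\vec d-nk\vec 1}(X)].$$
Applying the motivic morphism $\langle-\rangle$ and introducing the formal power series
$$F(x):=\sum_{\vec d\geq 0}\chi(\sp{\vec d}(X))x^{|\vec d|},\qquad f(x):=\sum_{\vec d\geq 0}\chi(\mathcal Z_n^{\vec d}(X))x^{|\vec d|},$$
the substitution $\vec e=\vec d-nk\vec 1$ (so $|\vec d|=|\vec e|+mnk$) collapses the resulting double sum into the Cauchy product
$$F(x)=\Bigl(\sum_{k\geq 0}\chi(\sp k(X))x^{mnk}\Bigr)\cdot f(x)=(1-x^{mn})^{-\chi(X)}\cdot f(x),$$
the last equality being MacDonald's identity applied with $t=x^{mn}$.

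To finish I would compute $F(x)$ a second way, directly from $\sp{\vec d}(X)=\prod_i\sp{d_i}(X)$, namely
$$F(x)=\prod_{i=1}^m\sum_{d_i\geq 0}\chi(\sp{d_i}(X))x^{d_i}=(1-x)^{-m\chi(X)},$$
again by MacDonald. Solving the two expressions for $F(x)$ against each other yields
$$\frac{f(x)}{F(x)}=(1-x^{mn})^{\chi(X)},$$
which is the stated formula. The main technical obstacle I anticipate is verifying that $\sp{\vec d}(X)=\coprod_k V_k$ is genuinely an $\bbm$-stratification, i.e.\ that each $V_k$ is locally compact and that all finite unions of closures are cft and locally contractible. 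Since $\mathcal Z_n^{\vec d}(X)$ is the complement in $\sp{\vec d}(X)$ of the closed $n$-saturated-diagonal locus, and the closures satisfy $\overline{V_k}\subset\bigcup_{k'\geq k}V_{k'}$, the preservation results for $\mathcal M$ established in \cite{kt} together with the known stratifiability of symmetric products should handle this. The remaining task of promoting the continuous bijection $\sp k(X)\times\mathcal Z_n^{\vec d-nk\vec 1}(X)\to V_k$ to a homeomorphism reduces to a straightforward analysis of the addition map on symmetric products, performed stratum by stratum.
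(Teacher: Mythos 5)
Your proposal is correct and follows essentially the same route as the paper: the paper's proof of Theorem \ref{fww} stratifies $\sp{\vec d}(X)$ by the singular strata $\sing{k}^{\vec d}(X)\cong \sp{k}(X)\times\mathcal Z_n^{\vec d-kn}(X)$ (your $V_k$), derives the same additive identity, and collapses the double sum into a Cauchy product using MacDonald's identity with $t=x^{mn}$. The only cosmetic difference is that you apply the motivic morphism $\langle-\rangle$ immediately after the additive identity to work with $\chi$ directly, whereas the paper carries the computation in $K_0(\bbm)$ and specializes to $\chi$ at the end.
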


The above  computation took a few pages of homological calculations to derive, using tools like the Leray spectral sequence and shellability of simplicial complexes.
We will give a much shorter combinatorial derivation of the same formula valid more generally for any $[X]$ in the Grothendieck ring. Our analogous formula specializes immediately to \eqref{formula} in the case when $X$ is an even dimensional oriented manifold, since in that case $\langle X\rangle = \chi (X)$.

More precisely we prove

\begin{theorem}\label{fww} For $X\in\bbm$,
$${\sum_{\vec d\in\bbz^m_{\geq 0}}\left[\mathcal Z_n^{\vec d}\right]x^{|\vec d|}\over
\sum_{\vec d\in\bbz^m_{\geq 0}}
\left[\sp{\vec d}(X)\right]x^{|\vec d|}} =  (1-x^{mn})^{[X]}$$
Equivalently
$\displaystyle\sum_{\vec d\in\bbz^m_{\geq 0}}\left[\mathcal Z_n^{\vec d}\right]x^{|\vec d|}
=
(1-x^{mn})^{[X]}(1-x)^{-m[X]}$.
\end{theorem}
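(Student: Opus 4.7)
The plan is to reduce the statement, via a unique-decomposition argument for multi-configurations, to a pointwise identity in $K_0(\bbm)$, and then finish by a short generating-function manipulation. Given $D=(D_1,\ldots,D_m)\in\sp{\vec d}(X)$, at each $x\in X$ with multiplicities $a_i(x)$ in the $i$-th color, set
\[
b_D(x):=\left\lfloor\min_i a_i(x)/n\right\rfloor,\qquad k(D):=\sum_x b_D(x),\qquad B_D:=\sum_x b_D(x)\cdot x .
\]
Then $D$ decomposes uniquely as $D=nB_D+D'_D$ with $B_D\in\sp{k(D)}(X)$ and residual $D'_D\in\mathcal Z_n^{\vec d-k(D)n\mathbf 1}(X)$: by construction, at every point of $D'_D$ some color has multiplicity strictly less than $n$. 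Conversely any pair $(B,D')\in\sp{k}(X)\times\mathcal Z_n^{\vec d-kn\mathbf 1}(X)$ assembles into $nB+D'\in\sp{\vec d}(X)$ with $k(nB+D')=k$.

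Next, stratify $\sp{\vec d}(X)$ by the value of $k(D)$. The function $k(D)$ is upper semi-continuous: if $D_t\to D_0$ and points $x_j$ of $D_t$ collapse to $x_0\in\mathrm{supp}(D_0)$, then $\sum_j a_i(D_t,x_j)\to a_i(D_0,x_0)$, and the inequalities
\[
\sum_j \left\lfloor\min_i a_i(D_t,x_j)/n\right\rfloor\;\le\;\tfrac{1}{n}\sum_j\min_i a_i(D_t,x_j)\;\le\;\tfrac{1}{n}\min_i\sum_j a_i(D_t,x_j),
\]
together with integrality of the left side, give $\sum_j b_{D_t}(x_j)\le b_{D_0}(x_0)$, hence $k(D_t)\le k(D_0)$ in the limit. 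Consequently each $S_k=\{k(D)=k\}$ is locally closed; in fact $E_k:=\{k(D)\ge k\}$ is the image of the continuous addition map $\sp{k}(X)\times\sp{\vec d-kn\mathbf 1}(X)\to\sp{\vec d}(X)$, $(B,D')\mapsto nB+D'$, so by the symmetric-product closure arguments from \cite{kt} the chain $\sp{\vec d}(X)=E_0\supset E_1\supset\cdots$ is an $\bbm$-stratification. Restricting the addition map to $\sp k(X)\times\mathcal Z_n^{\vec d-kn\mathbf 1}(X)$ gives a bijection onto $S_k$ whose inverse $D\mapsto(B_D,D'_D)$ is continuous on $S_k$, since the multi-set of ``bad'' points with $b_D>0$ varies continuously when $k(D)$ stays constant. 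Passing to $K_0(\bbm)$ we thus obtain the pointwise identity
\[
[\sp{\vec d}(X)]=\sum_{k\ge 0}[\sp{k}(X)]\cdot[\mathcal Z_n^{\vec d-kn\mathbf 1}(X)],
\]
with terms taken to vanish whenever $\vec d-kn\mathbf 1$ has a negative coordinate.

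Multiplying both sides by $x^{|\vec d|}$, summing over $\vec d\in\bbz_{\ge 0}^m$, and reindexing the right-hand side by $\vec d'=\vec d-kn\mathbf 1$ (so $|\vec d|=|\vec d'|+kmn$) gives
\[
\sum_{\vec d}[\sp{\vec d}(X)]\,x^{|\vec d|}\;=\;\Bigl(\sum_{k\ge 0}[\sp{k}(X)]\,x^{kmn}\Bigr)\cdot\sum_{\vec d'}[\mathcal Z_n^{\vec d'}(X)]\,x^{|\vec d'|}.
\]
Because $\sp{\vec d}(X)=\prod_i\sp{d_i}(X)$, the left-hand side factors as $Z(x)^m$, where $Z(t):=\sum_k[\sp k(X)]t^k$. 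Using the convention $(1-t)^{-[X]}:=Z(t)$ furnished by the Macdonald-type symmetric-product formula of \S\ref{permprod}, the first factor on the right equals $Z(x^{mn})=(1-x^{mn})^{-[X]}$, and rearranging yields
\[
\sum_{\vec d}[\mathcal Z_n^{\vec d}(X)]\,x^{|\vec d|}\;=\;\frac{Z(x)^m}{Z(x^{mn})}\;=\;(1-x^{mn})^{[X]}(1-x)^{-m[X]},
\]
which is the desired identity in either stated form.

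The main obstacle is the $\bbm$-stratification of $\sp{\vec d}(X)$ by $k(D)$: once upper semi-continuity of $k(D)$ and continuity of the recovery map $D\mapsto(B_D,D'_D)$ on each $S_k$ are verified carefully against the closure conditions in the definition of $\bbm$ (both resting on multiplicity upper semi-continuity in symmetric products), the rest is elementary bookkeeping.
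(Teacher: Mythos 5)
Your proposal is correct and follows essentially the same route as the paper: decompose each multi-configuration uniquely as $n$ times a symmetric-product part plus a residual in $\mathcal Z_n$, obtain the recursion $[\sp{\vec d}(X)]=\sum_k[\sp{k}(X)]\,[\mathcal Z_n^{\vec d-kn}(X)]$ from the resulting stratification, and finish with the same generating-function manipulation via MacDonald's formula $\sum_k[\sp{k}(X)]t^k=(1-t)^{-[X]}$. The only difference is presentational: you stratify all of $\sp{\vec d}(X)$ by the invariant $k(D)$ and verify its upper semicontinuity explicitly, whereas the paper stratifies the singular complement of $\mathcal Z_n^{\vec d}$ by the strata $\hbox{Sing}_{k}^{\vec d}\cong\sp{k}(X)\times\mathcal Z_n^{\vec d-kn}(X)$, which is the same decomposition.
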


The second formula in the theorem is a direct consequence of the first, knowing that by a result of MacDonald,
$\displaystyle\sum [\sp{d}(X)]x^d = (1-x)^{-[X]}$ (\cite{gz,kt}), and so
$\sum_{\vec d\in\bbz^m_{\geq 0}}
\left[\sp{\vec d}(X)\right]x^{|\vec d|}$ is the product $m$-times of this series.

\begin{proof}
Clearly
$\mathcal Z_n^{\vec d} = \sp{\vec d}(X)$ if one of the $d_i$'s is strictly less than $n$.
In general, let's express $\mathcal Z_n^{\vec d}$ as the complement in
$\sp{\vec d}(X)$ of a ``singular set''
$$\sing{}^{\vec d} (X) = \{(nx + \zeta_1,
\ldots, nx + \zeta_m)\in\sp{\vec d}(X)\ |\ \zeta_i\in\sp{d_i-n}(X)\}$$
This singular set is stratified by the $\sing{k}^{\vec d}(X)$, for $0< k\leq \min_i\left\lfloor {d_i\over n}\right\rfloor$, where the $k$-stratum is given as follows:
$$\sing{k}^{\vec d} (X) = \{(nx_1 + \cdots +nx_k + \zeta_1,
\ldots, nx_1 + \cdots +nx_k + \zeta_m)\in\sp{\vec d}(X)\ |\ (\zeta_1,\ldots,\zeta_m)
\in\mathcal Z_n^{d_1-kn,\ldots, d_m-kn}\}$$
In other words,
$\sing{k}^{\vec d} (X)\subset\sing{}^{\vec d} (X)$
consists of all tuples of configurations in $\sp{\vec d}(X)$ sharing \textit{exactly} $k$ points, each point having multiplicity $n$ or greater.
It is convenient to write
$\vec d - n = (d_1-n,\ldots, d_m-n)$.
There is a homeomorphism
\begin{equation}\label{cong}
\sing{k}^{\vec d} (X) \cong
\sp{k}(X)\times\mathcal Z_n^{\vec d-kn}(X)
\end{equation}
where the first factor, $\sp{k}(X)$ on the left, is describing up to homeomorphism, the diagonal in $\sp{k}(X)^m$. By definition of the singular strata, we have
$$\mathcal Z_n^{\vec d}(X)=\sp{\vec d}(X)\setminus\bigsqcup_{0< k\leq \min_i\left\lfloor {d_i\over n}\right\rfloor} \sing{k}^{\vec d-kn}(X)\ \ \ \ ,\ \ \hbox{where}\ \vec d = (d_1,\ldots, d_m)$$
Combining with \eqref{cong}, we deduce immediately the equality
\begin{equation}\label{one}
\sum_{0\leq k\leq \min_i\left\lfloor {d_i\over n}\right\rfloor} \left[\sp{k}(X)\right]\cdot
\left[\mathcal Z_n^{\vec d-kn}(X)\right] =
\left[\sp{\vec d}(X)\right]
\end{equation}
In order to write generating series, we need the exponent, and each $\vec d = (d_1,\ldots, d_m)$ contributes an exponent of $|\vec d|=\sum d_i$.
The term $\sp{k}(X)$ contributes the power term $x^{km}$ (recall this is the diagonal in $\sp{(k,\ldots, k)}(X)$), while the other term
$\left[\mathcal Z_n^{\vec d-kn}(X)\right]$
contributes the power term $x^{|\vec d-kn|}$. We can rewrite \eqref{one} in terms of series coefficients as in
$$\sum_{0\leq k\leq \min_i\left\lfloor {d_i\over n}\right\rfloor} \left(\left[\sp{k}(X)\right]\cdot
\left[\mathcal Z_n^{\vec d-kn}(X)\right]\right) x^{kmn}x^{|\vec d-kn|} =
\left[\sp{\vec d}(X)\right]x^{|\vec d|}
$$
But we know that $[\mathcal Z_n^{\vec d-kn}(X)]=[\emptyset]=0$ if
$k>\min_i\lfloor {d_i\over n}\rfloor$, and so this can be further refined to
$$\sum_{k\geq 0} \left(\left[\sp{k}(X)\right]x^{kmn}\cdot
\left[\mathcal Z_n^{\vec d-kn}(X)\right]x^{|\vec d-kn|}\right) =
\left[\sp{\vec d}(X)\right]x^{|\vec d|}
$$
thus passing to series
\begin{eqnarray*}
\sum_{\vec d}\left[\sp{\vec d}(X)\right]x^{|\vec d|}&=&\sum_{k\geq 0}\sum_{\vec d-kn} \left(\left[\sp{k}(X)\right]x^{kmn}\cdot
\left[\mathcal Z_n^{\vec d-kn}(X)\right] x^{|\vec d-kn|} \right)\\
&=&\left(\sum_{k\geq 0}\left[\sp{k}(X)\right]x^{kmn}\right)\cdot\left(\sum_{\vec d}\left[\mathcal Z_n^{\vec d}(X)\right] x^{|\vec d|} \right)
\end{eqnarray*}
But then again we need recall that
$\displaystyle [\sp{k}(X)] = {k+[X]-1\choose k}$, so that
\begin{eqnarray*}
\sum_{k\geq 0}\left[\sp{k}(X)\right]x^{kmn}
&=& \sum_{k\geq 0} {k+[X]-1\choose k}x^{kmn}\\
&=&
1 + [X]x^{mn} + {[X]([X]+1)\over 2}x^{2mn} + {[X]([X]+1)([X]+2)\over 3!}x^{3mn} + \cdots\\
&=&1 -\alpha x^{mn} + {\alpha (\alpha -1)\over 2}x^{2mn} - {\alpha (\alpha - 1)(\alpha - 2)\over 3!}x^{3mn} + \cdots\ \ \hbox{where $\alpha = -[X]$}\\
&=&(1-x^{mn})^{-[X]}
\end{eqnarray*}
Putting it all together yields the result.
\end{proof}

As an immediate corollary, we recover Theorem \ref{fww0}, since for even dimensional manifolds $\langle X\rangle = \chi_c(X) = \chi (X)$.

We illustrate Theorem \ref{fww} with two key examples.

\bex This is when $m=1$ and $\mathcal Z_n^d$ is the subspace of $\sp{d}(X)$ consisting of configurations of points whose multiplicity cannot exceed $n-1$. It is shown in \cite{kt} that
$$[\mathcal Z_n^d(X)] = \sum_{\sum i\alpha_i=n, i<d}
{(\Sigma\alpha_i)!\over \alpha_{1}!\cdots \alpha_{n}!}{[X]\choose\Sigma\alpha_i}$$
from which it is deduced that
$$\displaystyle 1+\sum_{n\geq 1} [\mathcal Z_{n}^d(X)]t^d=(1-t^{n})^{[X]}(1-t)^{-[X]}$$
in full agreement with Theorem \ref{fww}.
\eex

\bex\label{rational} Consider the space of based rational maps from $\bbp^1=\bbc\cup\{\infty\}$ to itself. Any such map, of positive degree $k$, can be written as
$$f(z) = {(z-a_1)\ldots (z-a_k)\over (z-b_1)\ldots (z-b_k)}$$
and so is completely determined by the zeros $\{a_i\}$ and the poles $\{b_j\}$, with the restriction that no pole can coincide with a zero. That is, if $\rat{k}(\bbp^1)$ denotes the set of all such maps, then $\rat{k}(\bbp^1) = \mathcal Z_1^{k,k}(\bbc)$, where here $X=\bbc$ and $m=2$. Observe that $S^1$ acts freely on $\mathcal Z_1^{d_1,d_2}$ as long as $d_1d_2\neq 0$. This action is given by complex multiplication of all roots and poles by $e^{i\theta}$. Since the action is free, with quotient $Q$, we can write
$[\mathcal Z_1^{d_1,d_2}]=[S^1][Q] = 0$ if $d_1d_2\neq 0$. This gives
\begin{eqnarray*}
\sum_{\vec d\in\bbz^2_{\geq 0}}
\left[\mathcal Z_1^{\vec d}\right]x^{|\vec d|}
&=&
{\mathcal Z}_1^{0,0} + \left([\mathcal Z_1^{1,0}(\bbc)] + [\mathcal Z_1^{0,1}(\bbc)]\right)x
+ \left([\mathcal Z_1^{2,0}(\bbc)] + [\mathcal Z_1^{0,2}(\bbc)]\right)x^2  \cdots \\
&=& 1+2x + 2x^2+\cdots = {1+x\over 1-x}
\end{eqnarray*}

On the other hand, observe that
$\sp{\vec d}(\bbc ) = \sp{d_1}(\bbc)\times\sp{d_2}(\bbc)$ has the class of a point, implying that
$$
\sum_{\vec d\in\bbz^2_{\geq 0}}
\left[\sp{\vec d}(\bbc)\right]x^{|\vec d|}
=
\sum {\mathcal Z}_1^{0,0} + \left([\mathcal Z_1^{1,0}(\bbc)] + [\mathcal Z_1^{0,1}(\bbc)]\right)x
+\cdots
= 1+2x + 3x^2+\cdots = {1\over (1-x)^2}
$$
(we could also have obtained this last equation from MacDonald's formula
$\sum_{\vec d\in\bbz^2_{\geq 0}}
\left[\sp{\vec d}(\bbc)\right]x^{|\vec d|}=(1-x)^{-m[X]}$,
and plugging $[X]=[\bbc]=1$). In any case, after taking ratios
$$
{\sum_{\vec d\in\bbz^m_{>0}}\left[\mathcal Z_n^{\vec d}\right]x^{|\vec d|}\over
\sum_{\vec d\in\bbz^m_{>0}}
\left[\sp{\vec d}(X)\right]x^{|\vec d|}}
=
{{1+x\over 1-x}\over {1\over (1-x)^2}}
= (1+x)(1-x)= 1-x^2 = (1-x^2)^{[\bbc]}$$
we obtain the desired formula.
\eex

\addcontentsline{toc}{section}{Bibliography}
\bibliography{biblio}

\begin{thebibliography}{doC}
\bibitem{akita} T. Akita, \textit{On the Euler characteristic of the orbit space of a proper $\Gamma$-complex}, Osaka j. Math. {\bf 36} (1999), 783--791.
\bibitem{allouche} J.P. Allouche, \textit{Note on the constructible sets of a topological space.} Papers on general topology and applications,  Ann. New York Acad. Sci. {\bf 806}, New York Acad. Sci. (1995), 1–10.

\bibitem{bartosz}
B. Naskrecki, Z. Dauter and M. Jaskolski,
\textit{A topological proof of the modified Euler characteristic based on the orbifold concept}, Acta Cryst., (2021) A77, 317--326.


\bibitem{burde} D.  Burde, \textit{Crystallographic actions on Lie groups and post-Lie algebra structures}, Communications in Mathematics {\bf 29} (2021) 67--89.

\bibitem{borel} A. Borel et al.  \textit{Seminar on transformation groups}, Annals of Mathematics Studies {\bf 46}, Princeton University Press, 1960.

\bibitem{bredon} Bredon, \textit{Sheaf Theory}, Second Edition, Springer.

\bibitem{brown} K.S. Brown, \textit{Complete Euler characteristics and fixed-point theory}, J. Pure and Applied Algebra {\bf 24} (1982), 103--121.

\bibitem{cooper} A.A. Cooper, V. de Silva, R. Sazdanovic, \textit{On configuration spaces and simplicial complexes}, New York J. Math. {\bf 25} (2019), 723--744.

\bibitem{coste} M. Coste, \textit{An introduction to $O$-minimal geometry}, RAAG (available online).


\bibitem{davis} M.W. Davis, \textit{The Euler characteristic of a polyhedral product}, Geom. Dedicata {\bf 159} (2012), 263--266.


\bibitem{fww} B. Farb, J. Wolfson, N. Wood, \textit{Coincidences between homological densities,
predicted by arithmetic}, Advances Math. {\bf 352} (2019), 670--716.

\bibitem{farkas} D. Farkas, \textit{Crystallographic groups and their mathematics}, Rocky Mountain J. Math. {\bf 11}, Number 4 (1981), 511--551.


\bibitem{gz} S.M. Gussain-Zade,
\textit{Equivariant analogues of the Euler characteristic
and Macdonald type equations},
Russian Math. Surveys 72:1 1–32


\bibitem{illman}  S. Illman, \textit{Existence and uniqueness of equivariant triangulations of smooth proper G-manifolds with some applications to equivariant Whitehead torsion}, J. Reine Angew. Math., {\bf 524} (2000), 129--183.

\bibitem{iriye} K. Iriye, D Kashimoto, \textit{Decompositions of suspensions of spaces involving polyhedral products}, Algebr. Geom. Topol. {\bf 16} (2016), no. 2, 825--841.


\bibitem{kt} S. Kallel, W. Taamallah, \textit{Combinatorial invariants of stratifiable spaces}, preprint 2021.\\
https://www.mims-institut.org/webroot/uploads/papers/MIMS$_{-}$1669746961.pdf \\
in https://www.mims-institut.org/Publications/conferences

\bibitem{kallel2} S. Kallel, \textit{Divisor spaces on Riemann surfaces}, AMS Transactions {\bf 350} (1998), 135--164.


\bibitem{labassi} F. Labassi, \textit{Sur les diagonales \'epaisses et leurs compl\'ementaires}, J. Homotopy Relat. Struct. {\bf 10} (2015), 375--400.

\bibitem{macdo}
I. G. Macdonald, \textit{The Poincare Polynomial of a Symmetric Product},
Mathematical Proceedings of the Cambridge Philosophical Society {\bf 58} Issue 04 (1962), 563--568.
\bibitem{ryan} P. J. Ryan, Euclidean and non-Euclidean Geometry, Cambridge University Press (1986).

\bibitem{petersen} D. Petersen, \textit{A spectral sequence for stratified spaces and configuration spaces of points}, Geometry and Topology {\bf 21} (2017), 2527--2555.

\bibitem{neeraja} N. Sahasrabudhe, \textit{Grothendieck ring of varieties}, master thesis (Julien Sebag),
Bordeaux. Available online.

\bibitem{stanley}
R. P. Stanley, \textit{f-vectors and h-vectors of simplicial posets}, J. Pure Appl. Algebra, {\bf 71} (2-3) (1991), 319--331.

\bibitem{tomdieck} T. tom Dieck, Transformation Groups, de Gruyter Studies in Mathematics 8, Walter de Gruyter, Berlin-
New York, 1987.

\bibitem{tamaki} D. Tamaki, \textit{Cellular stratified spaces},
 \textit{Combinatorial and toric homotopy}, 305–435, Lect. Notes Ser. Inst. Math. Sci. Natl. Univ. Singap., {\bf 35}, World Sci. Publ., Hackensack, NJ, (2018).



\bibitem{wachs} M.L. Wachs, \textit{Poset topology: tools and applications}, Geometric combinatorics, IAS/Park City Math. Ser.13, Amer. Math. Soc. (2007), 497--615.

\bibitem{yokura} S. Yokura, \textit{Decomposition spaces and poset-stratified spaces}, https://arxiv.org/pdf/1912.00339.pdf

\end{thebibliography}

\end{document}